\newtheorem{theorem}{Theorem}[section]
\newtheorem{lemma}[theorem]{Lemma}
\theoremstyle{definition}
\newtheorem{definition}[theorem]{Definition}
\newtheorem{definitions}[theorem]{Definitions}
\newtheorem{definitions and remarks}[theorem]{Definitions and Remarks}
\theoremstyle{remark}
\newtheorem{remark}[theorem]{Remark}
\newtheorem{remarks}[theorem]{Remarks}
\numberwithin{equation}{section}
\newcommand{\inv}{\mathrm{inv}}
\newcommand{\Sing}{\mathrm{Sing}\,}
\newcommand{\ord}{\mathrm{ord}}
\newcommand{\car}{\mathrm{char}\,}
\newcommand{\Gal}{\text{Gal}}
\newcommand{\al}{{\alpha}}
\newcommand{\be}{{\beta}}
\newcommand{\de}{{\delta}}
\newcommand{\ep}{{\epsilon}}
\newcommand{\D}{{\Delta}}
\newcommand{\g}{{\gamma}}
\newcommand{\s}{{\sigma}}
\newcommand{\IN}{{\mathbb N}}
\newcommand{\IA}{{\mathbb A}}
\newcommand{\IC}{{\mathbb C}}
\newcommand{\IZ}{{\mathbb Z}}
\newcommand{\cO}{{\mathcal O}}
\newcommand{\cQ}{{\mathcal Q}}
\newcommand{\cS}{{\mathcal S}}
\newcommand{\uk}{\underline{k}}
\newcommand{\tI}{{\tilde I}}
\newcommand{\wcO}{{\widehat \cO}}
\newcommand{\llbr}{{\llbracket}}
\newcommand{\rrbr}{{\rrbracket}}
\newcommand{\llb}{{(\!(}}
\newcommand{\rrb}{{)\!)}}
\begin{document}

\title[Resolution except for minimal singularities II]
{Resolution except for minimal singularities\\II. The case of four variables}

\author{Edward Bierstone}
\address{The Fields Institute, 222 College Street, Toronto, Ontario, Canada
M5T 3J1, and University of Toronto, Department of Mathematics, 40 St. George Street,
Toronto, Ontario, Canada M5S 2E4}
\email{bierston@fields.utoronto.ca}
\thanks{The authors' research was supported in part by the following grants:
Bierstone: NSERC OGP0009070 and MRS342058, Lairez: NSERC OGP0009070 
and MRS342058, Milman: NSERC OGP0008949.}

\author{Pierre Lairez}
\address{Ecole Normale Sup\'erieure, D\'epartement de Math\'ematiques et
Applications, 45 rue d'Ulm, 75005 Paris, France}
\email{pierre.lairez@normalesup.org}

\author{Pierre D. Milman}
\address{Department of Mathematics, University of Toronto, 40 St. George Street,
Toronto, Ontario, Canada M5S 2E4}
\email{milman@math.toronto.edu}

\subjclass{Primary 14B05, 14E15, 32S45; Secondary 14J17, 32S05, 32S10, 58K50}

\keywords{birational geometry, resolution of singularities, normal crossings, desingularization invariant, normal form}

\begin{abstract}
In this sequel to \cite{BMminI}, we find the smallest class of singularities
in four variables with which we necessarily end up if we resolve singularities except
for normal crossings. The main new feature is a
characterization of singularities in four variables which occur as limits of triple normal
crossings singularities, and which cannot be eliminated by a birational morphism
that avoids blowing up normal crossings singularities. This
result develops the philsophy of \cite{BMminI}, that the desingularization invariant 
together with natural geometric information can be used to compute
local normal forms of singularities.
\end{abstract}

\maketitle
\setcounter{tocdepth}{1}
\tableofcontents

\section{Introduction}\label{sec:introII} 
This article is a sequel to \cite{BMminI}. We find the smallest class of singularities
in four variables with which we necessarily end up if we resolve singularities except
for normal crossings. The main feature beyond the techniques of \cite{BMminI} is a
characterization of singularities in four variables which occur as limits of triple normal
crossings singularities, and which cannot be eliminated by a birational morphism
that avoids blowing up the triple normal crossings singularities (Theorem \ref{thm:char}). The
latter result develops the philsophy of \cite{BMminI}, that the desingularization invariant 
of \cite{BMinv} together with natural geometric information can be used to compute
local normal forms of singularities.

The reader is referred to \cite{BMminI} for the background and techniques of 
this article. Throughout the paper, an \emph{algebraic variety} means a scheme of finite 
type over a field $\uk$, and $\car \uk = 0$.

\begin{definitions} \label{def:ncII}
We say that $X$ has \emph{normal
crossings} at a point $a$ if, locally at $a$, $X$ can be embedded in a smooth
variety $Z$ with local \'etale coordinates $(x_1,\ldots, x_n)$ at $a$ in which $X$ is defined by
a monomial equation 
\begin{equation}\label{eq:monII}
x_1^{\al_1}\cdots x_n^{\al_n} = 0
\end{equation}
(where the $\al_i$ are nonnegative integers). 
We will say that $X$ has \emph{normal crossings of order} $k$ (or nc$k$) at $a$ if precisely 
$k$ exponents $\al_i$ are nonzero in (\ref{eq:monII}). A singularity $xy = 0$ is called
\emph{double normal crossings} (nc2), a singularity $xyz = 0$ \emph{triple normal
crossings} (nc3), etc. Let $X^{\mathrm{nc}}$ denote the locus of points of $X$ having only normal
crossings singularities. ($X^{\mathrm{nc}}$ includes all smooth points of $X$.)
\end{definitions}

A variety $X$ has normal crossings at $a$ if and only if it can be defined at $a$ by a monomial
equation with respect to formal coordinates, after a finite extension of the ground field $\uk$.

\begin{definition} \label{def:SII}
Let $\cS$ denote the following class of singularities in four variables $(w,x,y,z)$:
\medskip

\renewcommand{\arraystretch}{1.1}
\begin{tabular}{r l}
                   $xy = 0$ & \quad nc2 \\
                 $xyz = 0$ & \quad nc3 \\
               $xyzw = 0$ & \quad nc4 \\
      $z^2 + xy^2 = 0$ & \quad \emph{pinch point} pp \\ 
      $z^2 + (y + 2x^2)(y - x^2)^2 = 0$ & \quad \emph{degenerate pinch point} dpp \\
      $x(z^2 + wy^2) = 0$ & \quad \emph{product} prod \\
      $z^3 + wy^3 + w^2x^3 -3wxyz = 0$ & \quad \emph{cyclic point} cp3 \\
\end{tabular}
\end{definition}

In other words, $\cS$ is the class of singularities that can be written in local \'etale
coordinates (or in formal variables after finite field extension) as one of the 
\emph{normal forms} in the preceding table. 

Let $X^\cS$ denote
the locus of points of $X$ having only singularities in $\cS$. In other words, if 
$X$ is an algebraic variety of dimension three, then $X^\cS$ is the locus
of points $a$ of $X$ such that either $a$ is a smooth point or $a$ has a neighbourhood 
$U$ where $X|_U$ admits an embedding $X|_U \hookrightarrow Z$ in a smooth 
4-dimensional variety $Z$, and $X$ has a singularity in $\cS$ at $a$, with respect to suitable
local \'etale coordinates of $Z$.

All singularities in $\cS$ are hypersurface singularities. We say that
$X$ is a \emph{hypersurface} if, locally, $X$ can be defined by a principal ideal on
a smooth variety. (We say that $X$ is an \emph{embedded hypersurface} if $X
\hookrightarrow Z$, where $Z$ is smooth and $X$ is defined by a principal ideal on $Z$.)

\begin{theorem}\label{thm:mincompl4var}
Let $X$ denote a reduced variety of pure dimension $3$. Then
there is a morphism $\s: X' \to X$ given by a finite sequence of admissible blowings-up
\begin{equation}\label{eq:blupX.4}
X = X_0 \stackrel{\s_1}{\longleftarrow} X_1 \longleftarrow \cdots
\stackrel{\s_{t}}{\longleftarrow} X_{t} = X'\,,
\end{equation}
such that
\begin{enumerate}
\item[(a)] $X' = (X')^{\cS}$,
\item[(b)] $\s$ is an isomorphism over $X^{\mathrm{nc}}$.
\end{enumerate}
Moreover, the morphism $\s = \s_X$ (or the entire blowing-up sequence
\eqref{eq:blupX.4}) can be realized in a way that is functorial with respect
to \'etale morphisms.
\end{theorem}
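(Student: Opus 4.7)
The plan is to combine the strategy of \cite{BMminI} with the four-variable classification provided by Theorem \ref{thm:char}. I would run an algorithm driven by the desingularization invariant $\inv$ of \cite{BMinv}: at each stage, define the center $C_i \subset X_i$ as the locus of maximum value of $\inv$ restricted to $X_i \setminus X_i^{\mathrm{nc}}$. Provided $C_i$ is smooth and has simple normal crossings with the accumulated exceptional divisor, the blowing-up $\s_{i+1}$ is admissible; and since $C_i$ is disjoint from $X_i^{\mathrm{nc}}$ by construction, condition (b) is preserved throughout the sequence.

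The first step is to check that, away from $X_i^{\cS}$, the maximum-$\inv$ locus has the smoothness and transversality required to serve as an admissible center. The three-variable results of \cite{BMminI} supply this along any one-dimensional non-nc stratum: a transverse slice there is locally modeled on one of the three-variable normal forms of \cite{BMminI} (nc2, nc3, pp), and standard admissible blowings-up strictly reduce $\inv$. This is how one produces the ``product'' type singularities $x(z^2+wy^2)=0$ and the nc4 normal form as the four-variable normal forms coming from codimension-one strata.

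The essentially new content is the analysis at isolated four-variable points where $\inv$ attains a maximum and the surrounding geometry involves limits of triple normal crossings. This is precisely the content of Theorem \ref{thm:char}: a four-variable singularity which is a limit of nc3 points and cannot be removed without blowing up the nc3 stratum must, in suitable local \'etale coordinates, be one of pp, dpp, or cp3. I would invoke this characterization at each isolated exceptional point and conclude that the algorithm halts precisely when every remaining point lies in $\cS$. Termination follows from the usual strict decrease of $\inv$ under admissible blowing-up, combined with Noetherian induction on the non-$\cS$ locus.

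Functoriality with respect to \'etale morphisms is inherited from that of $\inv$: both the value of $\inv$ and the condition ``point of $X^{\mathrm{nc}}$'' are \'etale-local, so the center-selection rule commutes with \'etale base change. The principal obstacle is therefore Theorem \ref{thm:char} itself, whose proof I expect to require careful normal-form computations for the coefficient ideals of $\inv$ at nc3 limits, together with an enumeration of the ways in which a degenerate quadratic or cubic leading part can interact with three smooth branches meeting transversally. The asymmetric nature of cp3 (a cyclic $\IZ/3$-invariant) suggests one should also track the Galois action on the branches, which will force the normal form via a short case analysis once the generic $\inv$-reduction step of \cite{BMminI} is shown not to apply.
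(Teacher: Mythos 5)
There is a genuine gap, and it lies exactly where you locate the ``easy'' part. Your center-selection rule --- blow up the maximum locus of $\inv$ on $X_i \setminus X_i^{\mathrm{nc}}$ --- does not work, for two reasons. First, the singularities of $\cS$ are themselves non-nc, so your rule would eventually select prod, cp3 and pp points as (isolated) centers; but these cannot be eliminated this way. For instance, blowing up a cp3 point $z^3+wy^3+w^2x^3-3wxyz=0$ reproduces a cp3 point in the $w$-chart (this computation is in the paper's step (II)), so the procedure loops. Second, your termination argument is unfounded: $\inv$ is guaranteed to drop strictly only when one blows up the full center prescribed by the desingularization algorithm, which at the relevant stage is the whole smooth curve $(\inv=\inv(\mathrm{nc}3))$ --- and that curve is generically nc3, so blowing it up violates (b). For the smaller, point centers your rule produces, $\inv$-admissibility only gives $\inv$ non-increasing, and in the cp3 example it does not decrease. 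You also misquote what Theorem \ref{thm:char} delivers: the normal forms at limits of nc3 points, \emph{after cleaning} (Theorem \ref{thm:cleaned}), are nc3, prod and cp3 --- not pp, dpp, cp3; pp and dpp enter through limits of nc2 points and through neighbourhoods of cp3, and are handled by separate steps.

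What is missing is the actual mechanism of the proof: one never blows up the bad limit points directly. Instead one runs the algorithm only with centers lying over the complement of successively enlarged \emph{closed} sets $\cQ$ of points to be preserved (first nc4; then the closure of the nc3-locus, which after cleaning consists of nc3, prod, cp3; then the adherent dpp and tangential-nc2 loci; \dots), stopping at prescribed values of $\inv$, and converts the normal forms of Theorem \ref{thm:char} into $\cS$-singularities by \emph{cleaning} blowings-up whose centers lie in the exceptional divisor (the cleaning lemma of \cite{BMminI}, i.e.\ Theorem \ref{thm:cleaned}), not at the singular points themselves. An extra round of point blowings-up (step (II) of the paper) is needed precisely so that the dpp points adherent to cp3, and the nc2 points with tangential exceptional divisor adherent to prod, form closed sets that later centers can avoid; and the neighbouring nc2-type points are then reduced to nc2 or pp by three successive passes with top $\inv$ values $(2,1,1,0,1,0,\infty)$, $(2,0,1,1,1,0,\infty)$, $(2,0,1,0,\infty)$ followed by cleaning. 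Your étale-functoriality remark is fine in spirit, but it must be checked for this whole staged construction (the sets $\cQ$ and the cleaning centers), not just for $\inv$ itself. As written, the proposal reduces the theorem to Theorem \ref{thm:char} plus a generic $\inv$-descent that does not exist; the combinatorial bookkeeping above is the substance of the proof and cannot be skipped.
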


See \cite[Rem.\,1.15]{BMminI} on minimality of the class $\cS$.

An \emph{admissible} blowing-up means a blowing-up $\s$ with centre $C$ which is
smooth and has only simple normal crossings with respect to the exceptional
divisor. The latter condition means that, with respect to a suitable local embedding
of $X$ in a smooth variety $Z$ and the induced blowing-up sequence of $Z$,
there are regular coordinates (i.e., regular parameters) 
$(x_1,\ldots, x_n)$ at any point of $C$, in which 
$C$ is a coordinate subspace and each component of the exceptional divisor is a
coordinate hyperplane $(x_i = 0)$, for some $i$.

\begin{remark}\label{rem:hypersurf}
Theorem \ref{thm:mincompl4var} can be reduced to the case of a hypersurface using the strong desingularization algorithm of \cite{BMinv, BMfunct}. The algorithm involves 
blowing up with smooth centres in the maximum strata of the Hilbert-Samuel function. 
The latter determines the local embedding dimension, so the algorithm first eliminates 
points of embedding codimension $> 1$ without modifying normal crossings points
(or points with singularities in $\cS$).
\end{remark}

\begin{theorem}\label{thm:minstrongcompl4var}
Let $\cS'$ denote the class of singularities $\cS$ together with the following
singularity:
\begin{equation}\label{eq:excII}
z^2 + y(wy + x^2)^2 = 0 \quad \text{\emph{exceptional singularity} \emph{exc}}
\end{equation}
Let $X$ denote a reduced variety of pure 
dimension $3$. Then
there is a morphism $\s: X' \to X$ given by a finite sequence of admissible 
blowings-up \eqref{eq:blupX.4}
such that
\begin{enumerate}
\item[(a)] $X' = (X')^{\cS'}$,
\item[(b)] $\s$ is an isomorphism over $X^{\cS'}$.
\end{enumerate}
Moreover, the morphism $\s = \s_X$ (or the entire blowing-up sequence
\eqref{eq:blupX.4}) can be realized in a way that is functorial with respect
to \'etale morphisms.
\end{theorem}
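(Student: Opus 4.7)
The plan is to deduce Theorem \ref{thm:minstrongcompl4var} from Theorem \ref{thm:mincompl4var} by modifying the blowing-up sequence \eqref{eq:blupX.4} so that every centre lies in the closed complement $X \setminus X^{\cS'}$. Since $\cS \subset \cS'$, any such modified sequence that still terminates automatically satisfies (a), and the avoidance of $X^{\cS'}$ by all centres, together with admissibility, gives (b). By Remark \ref{rem:hypersurf} we may reduce to the case of an embedded hypersurface $X \hookrightarrow Z$ in a smooth $4$-dimensional variety $Z$.

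I would then run the algorithm of Theorem \ref{thm:mincompl4var}, controlled by the desingularization invariant $\inv$ of \cite{BMinv} together with the auxiliary geometric data of \cite{BMminI}. At each stage $X_i$, the characterization Theorem \ref{thm:char} identifies the points carrying singularities in $\cS' \setminus \cS$, namely the exc points: they are detected by a specific value $v_{\mathrm{exc}}$ of $\inv$ together with the geometric marker distinguishing exc among the normal forms for limits of nc3. Modify the $i$-th step of the algorithm by replacing the centre $C_i$ it chooses with its intersection with $X_i \setminus X_i^{\cS'}$, further decomposing into smooth admissible pieces if necessary and blowing them up in order. The construction remains functorial with respect to \'etale morphisms because $\inv$, the geometric markers, and the normal form \eqref{eq:excII} are all functorial in this sense.

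The main obstacle is termination: one must show that skipping the exc portion of each centre does not leave unresolved singularities outside $\cS'$. This reduces to the local statement that, in some neighbourhood of an exc point of any $X_i$, every singularity of $X_i$ is already in $\cS'$ --- equivalently, that the exc normal form is stable under admissible blowings-up whose centres avoid the exc locus. This local stability is precisely the content of Theorem \ref{thm:char}, which asserts that exc is among the normal forms for limits of nc3 that cannot be eliminated without blowing up nc3-singular points, and that the singularities accumulating at an exc point are themselves in $\cS'$. Combined with the invariant-decrease of the unmodified algorithm off the exc locus, this yields termination and hence Theorem \ref{thm:minstrongcompl4var}.
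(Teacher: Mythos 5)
Your proposal rests on a misreading of where the exceptional singularity comes from. The singularity exc is \emph{not} among the normal forms of Theorem \ref{thm:char}: that theorem (and its cleaned version, Theorem \ref{thm:cleaned}) classifies limits of nc$3$ points, and the resulting normal forms are nc$3$, prod and cp$3$ only. The singularity exc arises from limits of \emph{degenerate pinch points} (see Remark \ref{rem:exc} and the appeal to \cite[Rmk.\,4.4]{BMminI} in the paper's proof), so there is no ``value $v_{\mathrm{exc}}$ of $\inv$ plus nc$3$-marker'' detecting it, and the ``local stability'' statement you attribute to Theorem \ref{thm:char} is not contained in that theorem. This is not a citation quibble: it removes the entire support for your termination argument.

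More fundamentally, the theorem cannot be obtained by taking the sequence of Theorem \ref{thm:mincompl4var} and intersecting its centres with $X_i\setminus X_i^{\cS'}$. In Theorem \ref{thm:mincompl4var}, condition (b) only protects the nc locus, so that proof is free to blow up pp points, dpp points and their limits (this happens wholesale in its final step); here condition (b) forbids exactly that, and the consequence, worked out in steps (III)--(VI) of the paper's proof, is that cleaning with centres avoiding the dpp locus can only bring a generically-dpp component of $(\inv=\inv(\mathrm{dpp}))$ to $z^2+w^{\al}(y+w^{\be}x^2)^2(y-2w^{\be}x^2)=0$ with $\be=0$ and $|\al|\le 1$, i.e.\ either a dpp or the pre-exceptional form \eqref{eq:preexc}, $z^2+wy(y+x^2)^2=0$. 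The latter is \emph{not} in $\cS'$, and it cannot be removed by any further blowings-up disjoint from the dpp locus, so your modified algorithm stalls there and (a) fails. The paper's essential new step is the deliberate extra blowing-up in step (VI), whose centre is the quadratic-cone curve $(\inv=(2,0,1/2,1,1,0,\infty))$ together with its limit points of type \eqref{eq:preexc}; this blow-up \emph{converts} the pre-exceptional points into exc. In other words, exc is produced by a specifically chosen centre, not merely ``skipped'' by restricting centres to the complement of $X^{\cS'}$. (In addition, intersecting a centre with $X_i\setminus X_i^{\cS'}$ need not yield a smooth centre with normal crossings with respect to the exceptional divisor, and ``decomposing into smooth admissible pieces'' destroys the invariant-decrease bookkeeping of the original algorithm, so neither admissibility, nor functoriality, nor termination follows from your modification as stated.)
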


\begin{remark}\label{rem:exc} \emph{Exceptional singularity.}
The exceptional singularity is a limit of dpp singularities that cannot be eliminated 
without blowing up the dpp-locus: 

The equation \eqref{eq:excII} defines an embedded hypersurface $X \hookrightarrow
Z := \IA^4_{(w,x,y,z)}$. Outside the origin, $X$ has only smooth points, 2nc singularities
(when $z= wy+x^2 = 0$, $y\neq 0$), and degenerate pinch points dpp (when $z = y
= wy+x^2 = 0$, $w \neq 0$). Any birational morphism $Z' \to Z$ (where $Z'$ is smooth)
which eliminates the exceptional singularity at $0$ and 
is an isomorphism over the complement of $0$, factors through the blowing-up of $0$.

Let $\s: Z' \to Z$ denote the blowing-up of $0$ and let $X'$ be the strict transform of $X$
by $\s$. In the chart of $Z'$ with coordinates $(w,x,y,z)$ in which $\s$ is given by
$(w,wx,wy,wz)$, $X'$ is defined by the equation $z^2 + w^3y(y+x^2)^2 = 0$. After a
``cleaning'' blowing-up (centre $(z=w=0)$; see \cite[Sect.\,2]{BMminI}), we get
\begin{equation}\label{eq:B}
z^2 + wy(y+x^2)^2 = 0.
\end{equation}

The hypersurface \eqref{eq:B} has quadratic cone singularities when $z=y=w=0$, $x\neq 0$,
pinch points pp when $z = w = y+x^2 = 0$, $y \neq 0$, and degenerate pinch points
dpp when $z=y= y+x^2 = 0$, $w\neq 0$.
Any birational morphism that preserves singularities in $\cS$ factors through the 
blowing-up either of $(z=y=w=0)$ or of $0$. The former leads to another exceptional
singularity, while the latter leads to another singularity of type \eqref{eq:B} after cleaning.
\end{remark}

Theorems \ref{thm:mincompl4var} and \ref{thm:minstrongcompl4var} will be proved
in Section 4. Our proofs also give normal forms or local models for the singularites of the
total transform of $X$, corresponding to $\cS$ or $\cS'$. 
(Equivalently, they give local models for the
\emph{transform} of a divisor $D \subset Z$ ($\dim Z = 4$), where the transform is defined 
as the support of the birational transform
plus the exceptional divisor). See \cite[Sect.\,1]{BMminI} and Section \ref{sec:dim4} below.
\smallskip

The results in this article form part of the subject of Pierre Lairez's 
M\'emoire de Magist\`ere
at the Ecole Normale Sup\'erieure. The authors are grateful to Franklin Vera Pacheco
for many important comments.

\subsection{Limits of triple normal crossings points}\label{subsec:nc3} Our proofs
of Theorems \ref{thm:mincompl4var} and \ref{thm:minstrongcompl4var} 
are based on using the desingularization invariant 
of \cite{BMinv} as a tool for computing and simplifying local normal forms.  
In \cite[Sects.\,1,5]{BMminI}, we try to provide a working knowledge of the desingularization
algorithm and the invariant as they are used here, for a reader not necessarily familiar
with a complete proof of resolution of singularities. The reader is referred to the latter
for more details of the notions below. 

Suppose that $X \hookrightarrow Z$
is an embedded hypersurface, where $Z$ is smooth. Let $\inv = \inv_X$ denote the
desingularization invariant for $X$. We recall that $\inv$ is defined iteratively on the
strict transform $X_{j+1}$ of $X =X_0$ for any finite sequence of $\inv$-\emph{admissible} 
blowings-up 
\begin{equation}\label{eq:finblupII}
Z = Z_0 \stackrel{\s_1}{\longleftarrow} Z_1 \longleftarrow \cdots
\stackrel{\s_{j+1}}{\longleftarrow} Z_{j+1}\,.
\end{equation}
(A blowing-up is $\inv$-\emph{admissible} if it is admissible and $\inv$ is constant
on each component of its centre.) In particular, $\inv(a)$, where $a \in X_{j+1}$ 
depends not only on $X_{j+1}$
but also on the \emph{history} of blowings-up (\ref{eq:finblupII}).

Let $a \in X_j$. then $\inv(a)$ has the form
\begin{equation}\label{eq:invintroII}
\inv(a) = (\nu_1(a), s_1(a), \ldots, \nu_t(a), s_t(a), \nu_{t+1}(a))\,,
\end{equation}
where $\nu_k(a)$ is a positive rational number (called a \emph{residual multiplicity}) if $k\leq t$, 
each $s_k(a)$ is a nonnegative integer (which counts certain components of the exceptional
divisor), and $\nu_{t+1}(a)$ is either $0$ or $\infty$.
The successive pairs $(\nu_k(a),s_k(a))$ are defined inductively over
\emph{maximal contact} subvarieties of increasing codimension.

It is easy to see that, in \emph{year zero} (i.e., if $j=0$), then $\inv(a) = (2,0,1,0,\infty)$
if and only if $X$ has a double normal crossings singularity $z^2 + y^2 = 0$ at $a$.
Following are some other year-zero hypersurface examples:
\medskip

\renewcommand{\arraystretch}{1.1}
\begin{tabular}{r c l}
                   $x = 0$ & \,smooth\, & $\inv(0) = \inv(\mathrm{nc}1) := (1,0,\infty)$\\
               $x_1x_2\cdots x_k = 0$ & nc$k$ & 
                                  $\inv(0) = \inv(\mathrm{nc}k) := (k,0,1,0,\ldots,1,0,\infty)$\\
   $z^2 + xy^2 = 0$ & pp & $\inv(0) = \inv(\mathrm{pp}) := (2,0,3/2,0,1,0,\infty)$
\end{tabular}
\medskip

\noindent
(where, for nc$k$, there are $k-1$ pairs $(1,0)$). For $k \geq 3$, nc$k$ is 
not characterized by the value of $\inv$; for example the singularity 
$x_1^k + x_2^k + \cdots + x_k^k = 0$ also has $\inv(0) = (k,0,1,0,\ldots,1,0,\infty)$
with  $k-1$ pairs $(1,0)$.

Let $X \hookrightarrow Z$ denote an embedded hypersurface, where $\dim Z = 4$.
As above, if $a$ is a triple normal crossings point of $X$, then 
$\inv(a) = \inv(\text{nc}3) := (3,0,1,0,1,0,\infty)$ (this is ``year zero'').
Consider the desingularization algorithm applied to $X \subset Z$.
In Theorem \ref{thm:char} following, 
we provide normal forms for the singularities which can occur at special points of 
a component $C$ of the locus
\begin{equation}\label{eq:invnc3}
(\inv = \inv(\text{nc}3))
\end{equation}
in the strict transform $X_{j_0}$, for any year $j_0$ of the resolution history 
\eqref{eq:finblupII},
assuming that the generic point of $C$ is nc$3$. Theorem \ref{thm:cleaned} below provides normal forms
(in $\cS$) for the singularities we get by applying  \emph{cleaning} blowings-up to simplify
the preceding singularities. (See \cite[Sects.\,1,2]{BMminI}.)

The locus $(\inv = \inv(\text{nc}3)) \subset X_{j_0}$ is a smooth curve. Let $a \in C$, where 
$C$ is a component of $(\inv = \inv(\text{nc}3))$ which is generically nc$3$. Using the
Weierstrass preparation theorem, in suitable
\'etale coordinates $(w,x,y,z)$ at $a = 0$, we can write the equation of $X_{j_0}$ in $Z_{j_0}$
as $f(w,x,y,z) = 0$, where $f$ is nc$3$ on $(x = y = z =0,\, w \neq 0)$, and
\begin{equation}\label{eq:cubic}
f(w,x,y,z) = z^3 + A(w,x,y)z^2 + B(w,x,y)z + C(w,x,y),.
\end{equation}
and we can assume that $A=0$, by ``completing the cube''.

\begin{theorem}\label{thm:char}
Let $X \hookrightarrow Z$ denote an embedded hypersurface, where $Z$ is a smooth 
algebraic variety of pure dimension four. Assume that the ground field $\uk$ is algebraically closed. Consider a finite sequence of $\inv$-admissible
blowings-up \eqref{eq:finblupII}. Let $a \in X_{j_0}$, for some $j=j_0$,
and let $f=0$ be a local defining
equation for $X_{j_0}$ at $a$. Suppose that $Z_{j_0}$ has a regular coordinate system 
$(w,x,y,z)$ at $a=0$ such that:
\begin{enumerate}
\item[(i)] $w=0$ is a local equation for the exceptional divisor (if the latter contains $a$);
\item[(ii)] $X_{j_0}$ is $\mathrm{nc}3$ at every nonzero point of the $w$-axis $(z=y=x=0)$;
\item[(iii)] $\inv(a) = \inv(\mathrm{nc}3) := (3,0,1,0,1,0,\infty)$.
\end{enumerate}
Then:
\begin{enumerate}
\item $f$ has three analytic branches at $a=0$ (i.e., three factors of order $1$ in a 
suitable \'etale neighbourhood) if and only if
\begin{equation*}\label{eq:3br}
f(w,x,y,z) = z\left(z+w^\al x\right)\left(z + w^\al\left(x\xi + w^\be y\right)\right),
\end{equation*}
where $\xi = \xi(w,x,y)$, after a suitable \'etale coordinate change preserving $(w=0)$.
\end{enumerate}
On the other hand, suppose that $f$ does not have three analytic branches at $a$. Then:
\begin{enumerate}
\item[(2)] The following are equivalent:
\smallskip
\begin{enumerate}
\item $f$ has two analytic branches at $a=0$;
\item $f(w^2,x,y,z)$ has three analytic branches at $0$;
\item after an \'etale coordinate change preserving $(w=0)$, we can write $f(w,x,y,z)$ 
either as
\begin{equation*}
\left(z+w^\al x\right)\left(z^2 + w^{2\al +1}\left(x\xi + w^\be y\right)^2\right),
\end{equation*}
where $\xi = \xi(w,x,y)$, or as
\begin{equation*}
\left(z+w^\al\left(y\eta + w^\be x\right)\right)\left(z^2 + w^{2\al +1}y^2\right),
\end{equation*}
where $\eta = \eta(w,x,y)$.
\smallskip
\end{enumerate}
\item[(3)] The following are equivalent:
\smallskip
\begin{enumerate}
\item $f$ is analytically irreducible at $a=0$;
\item $f(w^3,x,y,z)$ has three analytic branches at $0$;
\item after an \'etale coordinate change preserving $(w=0)$, we can write $f(w,x,y,z)$ as
\begin{equation*}
z^3 - 3w^\be y\left(y\eta + w^\g x\right)z + w^\al y^3 + w^{3\be - \al}\left(y\eta + w^\g x\right)^3,
\end{equation*}
where $\eta = \eta(w,x,y)$, $2\al < 3\be$ and $\al$ is not divisible by $3$.
\end{enumerate}
\end{enumerate}
\end{theorem}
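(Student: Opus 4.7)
The plan is to combine Weierstrass preparation, the invariant recursion of \cite{BMinv}, and a Galois-theoretic analysis of the base change $w\mapsto w^k$ to reduce cases (2) and (3) to the three-branch case (1).

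First I extract the structural consequences of condition (iii). Writing $f$ in the Weierstrass form \eqref{eq:cubic} and completing the cube gives $f = z^3 + B(w,x,y)z + C(w,x,y)$. The maximal contact hypersurface is $(z=0)$, and the coefficient ideal associated to $(f,3)$ is $(B^3, C^2)$ with critical multiplicity $6$; the subsequent pairs $(\nu_2,s_2), (\nu_3,s_3)$ and $\nu_4$ are computed from this coefficient ideal after separating the exceptional monomial factor (only $w=0$ can contribute, by (i)). Unpacking $\inv(a) = (3,0,1,0,1,0,\infty)$ stepwise through the recursion of \cite{BMinv}, and combining with the generic $\mathrm{nc}3$ condition (ii) at $w_0\neq 0$ which forces the cubic $f(w_0,x,y,z) \in \uk[x,y,z]$ to factor as three tangent planes in general position, pins $B$ and $C$ down to units times explicit monomials in $w, x, y$. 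This produces a finite list of shapes for $(B,C)$ parametrized by $w$-, $x$-, $y$-exponents along the $w$-axis, and verifies that any $a$ satisfying (i)--(iii) falls into one of these shapes.

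Next I translate the counting of analytic branches of $f$ at $a$ --- equivalently the number of irreducible factors of $f$ in $\widehat{\cO}_{Z_{j_0},a}[z]$ --- into a Galois statement. The discriminant $\Delta = -4B^3 - 27C^2$ vanishes only on $(w=0)$ along the $w$-axis (by Step 1 and (ii)), so the degree-$3$ cover defined by $f$ is \'etale over the complement of $(w=0)$, and the Galois group of a splitting field acts on the three roots through the inertia of $(w=0)$, a cyclic group generated by $w \mapsto \zeta_k w$ for some $k \in \{1,2,3\}$ by tame ramification in characteristic zero. This yields the equivalences (a)$\Leftrightarrow$(b) in parts (2) and (3), and reduces the normal-form statements there to case (1) applied to $f(w^k,x,y,z)$.

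In case (1) I write $f = z(z-\tilde\alpha_2)(z-\tilde\alpha_3)$ after translating $z$ so that one branch becomes $z=0$. Each $\tilde\alpha_i$ vanishes along the $w$-axis, and Step 1 together with generic $\mathrm{nc}3$ forces $\tilde\alpha_i = -w^\alpha \psi_i(w,x,y)$, where $\alpha = \min_i\, \mathrm{ord}_w \tilde\alpha_i$ (up to swapping $\tilde\alpha_2$ and $\tilde\alpha_3$), $\psi_i$ vanishes at $0$ in $(x,y)$, and the restrictions of $\psi_2,\psi_3$ to $w=0$ generate a $2$-dimensional subspace of the conormal of the $w$-axis. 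An \'etale coordinate change absorbing units into $(x,y)$ and preserving $(w=0)$ then normalizes $\psi_2 = x$ and $\psi_3 = x\xi + w^\beta y$, establishing part (1). For case (2), the Galois involution $w\mapsto -w$ swaps two of the three branches of $f(w^2,x,y,z)$, and the two stated normal forms correspond to whether the fixed linear branch separates from the swapped pair in the $x$- or in the $y$-direction. For case (3), the cyclic action $w\mapsto \zeta_3 w$ permutes the three branches of $f(w^3,x,y,z)$ cyclically, and the identity
\begin{equation*}
z^3 - 3pq\,z + p^3 + q^3 \,=\, \prod_{i=0}^{2}\bigl(z + \zeta_3^{\,i} p + \zeta_3^{-i} q\bigr),
\end{equation*}
applied with $p = w^\alpha y$ and $q = w^{3\beta-\alpha}(y\eta + w^\gamma x)$ (descended from the case-(1) form after the base change), yields the stated normal form; the condition $\alpha\not\equiv 0 \pmod 3$ expresses nontriviality of the cyclic action (hence irreducibility of $f$), and $2\alpha < 3\beta$ expresses that the dominant $w$-exponent in the coefficient ideal lies in $p^3$ rather than in $pq$, the configuration demanded by Step 1.

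The main obstacle is the bookkeeping in Step 1: identifying exactly which shapes for $(B,C)$ are compatible with $(3,0,1,0,1,0,\infty)$ and verifying that the normal forms in (1)(c), (2)(c), (3)(c) exhaust those shapes up to \'etale changes preserving $(w=0)$. The converse directions --- checking that each proposed normal form does yield $\inv(a) = (3,0,1,0,1,0,\infty)$ with generic $\mathrm{nc}3$ --- are direct computations once the forms are in hand.
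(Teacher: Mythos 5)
Your Step 2 contains the decisive gap. You claim that $\Delta$ ``vanishes only on $(w=0)$ along the $w$-axis'', so that the degree-$3$ cover defined by $f$ is \'etale over the complement of $(w=0)$ and the splitting field has cyclic Galois group generated by the inertia at $(w=0)$, whence (a)$\Leftrightarrow$(b) and the reduction of (2),(3) to case (1) via $w\mapsto w^k$. This is not true: the discriminant locus of $f=z^3+Bz+C$ over $(w,x,y)$-space contains, besides $(w=0)$, the surfaces where two of the three nc$3$ sheets cross (already for $z(z+x)(z+y)$ the discriminant is $x^2y^2(x-y)^2$), and these components pass through $a$. So the cover is not \'etale off $(w=0)$, Abhyankar--Jung does not apply (the discriminant is not a monomial times a unit --- the paper's Lemma \ref{lem:disc} only gives $\Delta=\Phi^2\Psi$ with $\Psi\notin(x,y)$), and the monodromy need not factor through the cyclic inertia at $(w=0)$. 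That the relevant Galois group nevertheless has no element of order $2$ (resp.\ is cyclic of order $3$) is exactly the hard content, and it genuinely requires the full hypothesis (iii), not just order $3$ plus generic nc$3$ along the $w$-axis: the example $f=(z+x)\left(z^2+(w+y)y^2\right)$ of Remark \ref{rem:abjung}(2) satisfies (i),(ii), has two branches, yet $f(w^2,x,y,z)$ does not split --- so any argument of the purely topological/tame-inertia type you propose, which never uses the residual multiplicities in $\inv(a)$ quantitatively, must fail.

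Relatedly, your Step 1 assertion that (iii) pins $B$ and $C$ down to ``units times explicit monomials'' is too strong and is where the deferred work actually lives. In the paper, the invariant is used much more delicately: one splits into the cases $\ord B^3>\ord C^2$ (Lemma \ref{lem:ordcoeff1}: then $\Delta$ is a square in $R$, and Lemma \ref{lem:Dsquare} gives splitting of $f(v^3,x,y,z)$ via the identity $4B^3=(C-A)(C+A)$ and a coprimality argument) and $\ord B^3\le \ord C^2$ (Lemma \ref{lem:ordcoeff2}: Weierstrass preparation gives $B=w^\al u(y^2-Px^2)$, the \emph{second} coefficient ideal together with $\inv(a)=(3,0,1,0,1,\ldots)$ forces $P$ to be a unit times $w^\be$, and only then does a Galois argument over $F=\mathrm{Frac}\,\uk\llbr v,x,y\rrbr$ with $w=v^2$, using the unit-preservation Lemma \ref{lem:preserveunits}, exclude order-$2$ elements and yield splitting of $f(v^2,x,y,z)$). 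The directions (b)$\Rightarrow$(c) are then handled by explicit symmetrization of the roots under $v\mapsto -v$ and $v\mapsto \ep v$ (Lemma \ref{lem:char}), which your Step 3 essentially reproduces and which is fine; but without a correct replacement for your \'etale/inertia claim, the equivalences (a)$\Leftrightarrow$(b) and the passage to the normal forms in (2)(c), (3)(c) are unproven.
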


\begin{remarks}\label{rem:abjung}
(1) Given that $\inv(a) = \inv(\mathrm{nc}3)$, $a \in X_{j_0}$, and that $X_{j_0}$ is 
generically $\mathrm{nc}3$ on the component of $(\inv = \inv(\mathrm{nc}3))$ containing
$a$, then we can choose coordinates satisfying the hypotheses of the theorem.
\smallskip

(2) The condition (b) in item (2) or (3) of Theorem \ref{thm:char} is reminiscent
of the Abhyankar--Jung Theorem (cf. \cite{Ab}). Note that the implication (a) $\Rightarrow$
(b) in (2) or (3) is not true if we weaken the hypothesis (iii) by assuming only that $f$
has order $3$ at $a$. For example, $f(w,x,y,z)=(z+x)\left(z^2 + (w+y)y^2\right)$ does not
have three analytic branches after substituting $w^2$ for $w$.
\end{remarks}

\begin{theorem}\label{thm:cleaned}
With the hypotheses of Theorem \ref{thm:char}, assume in addition that $\inv(\mathrm{nc}3)$
is the maximum value of $\inv$ on $X_{j_0}$. Then there is a morphism $\s: Z' \to Z_{j_0}$
given by a finite sequence of admissible blowings-up of $X_{j_0} \subset Z_{j_0}$ with centres
in the exceptional divisor, such that $\s^{-1}(a)$ intersects the strict transform
of $(\inv = \inv(\mathrm{nc}3))$ in a single point $a'$, and $X'$ is defined at $a'$ by one of the following equations in 
$\cS$, according to the corresponding case of Theorem \ref{thm:char}:
\begin{enumerate}
\item $xyz = 0$  \quad \emph{nc3};
\item $x(z^2 + wy^2) = 0$ \quad \emph{prod};
\item $z^3 + wy^3 + w^2x^3 -3wxyz = 0$  \quad \emph{cp3}.
\end{enumerate}
\end{theorem}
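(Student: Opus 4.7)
The plan is to handle each of the three cases of Theorem~\ref{thm:char} separately, starting from the explicit normal form provided there and applying an iterated cleaning blowing-up whose centre $(z = w = 0)$ lies in the exceptional divisor $(w = 0)$, is smooth, and meets it with simple normal crossings (hence admissible). In the chart with substitution $z \mapsto wz$ the total transform of $f$ acquires a factor $w^d$, where $d$ is the order of $f$ along the generic point of the centre; dividing out this exceptional factor produces a strict transform whose $w$-exponents in the defining equation are strictly smaller than those of $f$. After finitely many iterations, an \'etale coordinate change produces the prescribed model from $\cS$.

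For case~(1), $\al$ iterations (each dividing by $w^3$, the order of $f$ at the centre) reduce $f = z(z + w^\al x)(z + w^\al(x\xi + w^\be y))$ to $g = z(z + x)(z + x\xi + w^\be y)$. When $\be = 0$, the functions $Z_1 = z,\, Z_2 = z + x,\, Z_3 = z + x\xi + y$ together with $w$ form an \'etale coordinate system at the origin (the implicit function theorem recovers $y$), yielding $Z_1 Z_2 Z_3 = 0$, the nc3 model; when $\be \geq 1$, an additional round of cleanings (e.g.\ along $(y = w = 0)$) absorbs the residual $w^\be$ before the coordinate change. Case~(2) proceeds identically: $\al$ iterations reduce the normal form to $(z + x)(z^2 + w(x\xi + w^\be y)^2)$ in subcase~(a), or to $(z + y\eta + w^\be x)(z^2 + wy^2)$ in subcase~(b); the substitutions $X = z + x$ (respectively $X = z + y\eta + x$), $Z = z$, and $Y = x\xi + y$ (respectively $Y = y$) then yield the prod model $X(Z^2 + wY^2) = 0$.

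For case~(3), write $\al = 3q + r$ with $r \in \{1, 2\}$, using $3 \nmid \al$. The first $q$ cleanings of $(z = w = 0)$ each divide by $w^3$ and preserve the Weierstrass shape, reducing $(\al, \be)$ to $(r, \be - 2q)$ while $2\al < 3\be$ and $3 \nmid \al$ remain valid. The $w$-exponents of the four monomials $z^3,\, w^\be y(y\eta + w^\g x)z,\, w^\al y^3,\, w^{3\be - \al}(y\eta + w^\g x)^3$ of the equation must then be brought into the pattern $(0, 1, 1, 2)$ matching cp3; further cleanings, some again along $(z = w = 0)$ and others along centres such as $(y = w = 0)$ (depending on which monomial currently has the smallest $w$-exponent), accomplish this. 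A final \'etale change $X = y\eta + w^\g x$ (valid directly when $\g = 0$; when $\g \geq 1$ one further cleaning absorbs $w^\g$) together with a permutation of the variables $(x, y, z) \mapsto (X, z, y)$ yield $z^3 + wy^3 + w^2 x^3 - 3wxyz = 0$, the cp3 model.

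The main obstacle is the combinatorial bookkeeping in case~(3): the three asymmetric $w$-exponents $\al,\, \be,\, 3\be - \al$ of the Weierstrass coefficients evolve in a coupled but asymmetric way under the cleaning, and one must verify that the sequence terminates at exactly the triple $(1, 1, 2)$ required by cp3, with the auxiliary parameter $\g$ and the unit part of $\eta$ eliminable at the final step. A uniform observation needed in all three cases is that the strict transform of the $w$-axis is set-theoretically preserved under each cleaning of $(z = w = 0)$; this identifies the unique point $a' \in \sigma^{-1}(a)$ where the strict transform of $(\inv = \inv(\mathrm{nc}3))$ meets $\sigma^{-1}(a)$, and it is at this point that the normal-form identification takes place.
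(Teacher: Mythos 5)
Your overall strategy (iterated cleaning blowings-up with centres in the exceptional divisor applied to the normal forms of Theorem~\ref{thm:char}, followed by an \'etale coordinate change at the point $a'$ on the strict transform of the nc$3$-curve) is the same as the paper's, which simply invokes the cleaning lemma of \cite[Sect.\,2]{BMminI} as in \cite[\S\S4.2,\,4.3]{BMminI}. Your $(z=w=0)$ steps are correct: they do divide by $w^3$ and reduce $\al$ as you claim. But the argument has genuine gaps exactly where the residual exponents must be removed. First, the auxiliary centre you name, $(y=w=0)$, does not work: it is a surface along whose generic point $f$ has order $0$, so the strict transform coincides with the pullback, and at the point over $a$ lying on the strict transform of the $w$-axis the substitution $y\mapsto wy$ \emph{raises} the offending exponent (e.g.\ $w^\be y \mapsto w^{\be+1}y$ in case (1)). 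The centres that actually absorb $w^\be$ are curves in the exceptional divisor, e.g.\ $(x=z=w=0)$ for $z(z+x)(z+x\xi+w^\be y)$ and for $(z+x)(z^2+w(x\xi+w^\be y)^2)$, and $(y=z=w=0)$ for $(z+y\eta+w^\be x)(z^2+wy^2)$; along these the order is $3$ and the exponent drops by $1$ per blowing-up. Second, in case (2) you do not address $\be$ at all: the ``substitution'' $Y=x\xi+y$ (resp.\ $X=z+y\eta+x$) is not an \'etale coordinate change when $\be\geq 1$ (the differentials $dw,\,dz,\,d(z+x),\,d(x\xi+w^\be y)$ are dependent at $0$), and the germ is genuinely not prod: the linear branch meets the singular locus of the quadratic factor in two crossing lines ($w$-axis and $y$-axis, resp.\ $x$-axis), whereas for $x(z^2+wy^2)$ this intersection is a smooth curve. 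So further blowings-up, not a coordinate change, are required there.

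Third, in case (3) the part you yourself flag as the main obstacle --- that the exponent pattern can be brought to $(1,1,2)$ on $(y^3,xyz,x^3)$ (or its $x\leftrightarrow y$ swap $(2,1,1)$) and that $\g$ can be eliminated --- is exactly what is missing, and the hinted centre $(y=w=0)$ again fails for the reason above. The verification is not hard but must be done: after your $q$ steps one has $(\al,\be)=(r,\be-2q)$ with $r\in\{1,2\}$ and $2r<3(\be-2q)$; blowing up the curve $(y=z=w=0)$ divides by $w^3$ and sends $(\al,\be)\mapsto(\al,\be-1)$ when $3\be-\al\geq 3$ and $\g=0$, while for $\g\geq1$ the same centre keeps $(\al,\be)$ and lowers $\g$ by $1$; iterating lands on $(1,1)$ when $r=1$ and on $(2,1)$ when $r=2$, which is cp$3$ after permuting $x$ and $y$. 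Without this bookkeeping (and with the incorrect auxiliary centres), the proposal does not yet establish the theorem; with the curve centres substituted and the case-(3) induction written out, it becomes a correct, explicit version of the paper's appeal to the cleaning lemma.
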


Theorem \ref{thm:cleaned} follows from Theorem \ref{thm:char} by applying the
cleaning lemma \cite[Sect.\,2]{BMminI} to the normal forms in the latter. The cleaning
lemma is applied in exactly the same way as in \cite[\S\S4.2,\,4.3]{BMminI}, so we
refer the reader to \cite{BMminI} for details. The cleaning lemma will be used in the
same way again in several other steps of the proofs of Theorems \ref{thm:mincompl4var}
and \ref{thm:minstrongcompl4var} in Section 4 below.

Basic properties of cyclic singularities are presented in Section 2 following. Theorem
\ref{thm:char} will be proved in Section 3.

\section{Cyclic singularities}\label{sec:cp}

Let $X$ denote a hypersurface of dimension $n-1$; i.e., $X$ is defined
locally by an equation in $n$ variables. Then $n$-fold normal crossings
singularities nc$n$ of $X$ are isolated, and the locus of $(n-1)$-fold normal
crossings points is a smooth curve. A \emph{cyclic singularity} or \emph{cyclic point}
of \emph{order}
$n-1$, denoted cp($n-1$), is an irreducible singularity that occurs as a limit of nc($n-1$)
points of a hypersurface in $n$ variables, and which cannot be eliminated without
blowing up nc($n-1$) points. 

The cyclic singularity cp$k$ of order $k$ is related to the action of the cyclic group
$\IZ_k$ of order $k$ on $\IC^k$ by permutation of coordinates. In \S\ref{subsec:cp3}
following, we define cp$3$, which is needed for this article,
but it will be clear how to generalize the
construction to arbitrary $k$. The reader should check that cp$2$ $=$ pp.

\subsection{Cyclic points of order $3$} \label{subsec:cp3}
Consider the action of $\IZ_3$ on $\IC^3$ by permutation of coordinates. $\IZ^3$ is
generated by the cyclic permutation $\rho = (1,2,3)$; in terms of the coordinates
$(X,Y,Z)$ of $\IC^3$, $\rho(X,Y,Z) = (Z,X,Y)$. 

The matrices in any finite abelian subgroup of the general linear group $\text{GL}(k,\IC)$
can be diagonalized simultaneously \cite[Prop.\,2.7.2]{Stu}. A diagonalization of the action of $\IZ_3$
on $\IC^3$ is given by
\begin{align}\label{eq:diag}
y_0 &= \frac{1}{3}(X + Y + Z)\nonumber\\
y_1 &= \frac{1}{3}(X + \ep Y + \ep^2 Z)\\
y_2 &= \frac{1}{3}(X + \ep^2 Y + \ep Z)\nonumber
\end{align}
(the discrete Fourier transform); in other words,
$$
y_i \circ \rho = \ep^i y_i, \quad i = 0,1,2,
$$
where $\ep$ denotes the cube root of unity $\ep = e^{2\pi i/3}$.

It is easy to write a set of generators of the algebra of invariant polynomials
for the diagonalized action. Following is a set of basic invariants for the action of $\IZ_3$ 
above:
\begin{equation}\label{eq:invariants}
y_0,\, y_1y_2,\, y_1^3,\, y_2^3.
\end{equation}

Consider the inverse linear transformation of (\ref{eq:diag}):
\begin{align}\label{eq:inverse}
X &= y_0 + y_1 + y_2\nonumber\\
Y &= y_0 + \ep^2 y_1 + \ep y_2\\
Z &= y_0 + \ep y_1 + \ep^2 y_2. \nonumber
\end{align}

Let $\Phi (y_0,y_1,y_2)$ denote the polynomial $XYZ$ obtained from (\ref{eq:inverse}).
Then $\Phi (y_0,y_1,y_2)$ is invariant with respect to the action of $\IZ_3$, so
it is a polynomial in the basic invariants (\ref{eq:invariants}). Therefore
$\Phi (z, w^{1/3}y, w^{2/3}x)$ is a polynomial in $(x,y,z,w)$. 

\begin{definition}\label{def:cp3}
The \emph{cyclic singularity} cp$3$ of \emph{order} $3$ is defined by
$$
\Phi (z, w^{1/3}y, w^{2/3}x) = 0;
$$
in other words, from (\ref{eq:inverse}), by
\begin{equation}\label{eq:cp3}
z^3 + wy^3 + w^2x^3 -3wxyz = 0.
\end{equation}
\end{definition}
\smallskip

\subsection{Singularities in a neighbourhood of a cyclic point}\label{subsec:cpnbhd}
Consider the hypersurface $X \subset \IA^4$ defined by (\ref{eq:cp3}).
Then $X$ has a cp$3$ singularity at the origin. When $w \neq 0$, $X$ is
nc$3$ along the $w$-axis, and has only nc$2$ singularities outside the $w$-axis.

On the other hand, $\Sing X \cap (w=0)$ is the nonzero $x$-axis. We will
show that $X$ has degenerate pinch points along the nonzero $x$-axis.

For $x \neq 0$, write
$$
\eta = \frac{y}{2x}, \quad \zeta= \frac{z}{x},
$$
so that (\ref{eq:cp3}) can be rewritten as
$$
w^2 + 2(4\eta^3 - 3 \eta\zeta)w + \zeta^3 = 0,
$$
or, after completing the square, as
$$
\left(w + 4\eta^3 - 3 \eta\zeta\right)^2 + \zeta^3 - \left(4\eta^3 - 3 \eta\zeta\right)^2 = 0.
$$
Now,
$$
\zeta^3 - \left(4\eta^3 - 3 \eta\zeta\right)^2 
= \left((\zeta - 3\eta^2) - \eta^2\right)^2 \left((\zeta - 3\eta^2) + 2\eta^2\right).
$$
\smallskip

In other words, if we make a change of coordinates
\begin{align*}
y' &= \frac{1}{2}\left(\frac{y}{x}\right),\\
z' &= \frac{z}{x} - \frac{3}{4}\left(\frac{y}{x}\right)^2,\\
w' &= w + \frac{1}{2}\left(\frac{y}{x}\right)^3 - \frac{3}{2}\left(\frac{y}{x}\right)\left(\frac{z}{x}\right)
\end{align*}
when $x \neq 0$, then (\ref{eq:cp3}) can be rewritten (after dropping primes) as
\begin{equation}\label{eq:newcoords}
w^2 + \left(z -  y^2\right)^2\left(z + 2y^2\right) = 0.
\end{equation}
This equation defines a degenerate pinch point when $y = z = w = 0$; i.e., 
(\ref{eq:cp3}) has a degenerate pinch point when $y = z = w = 0$, $x \neq 0$, as claimed.
We also see that a cyclic point cp$3$  (\ref{eq:cp3}) becomes dpp after blowing up the 
nc$3$-axis $(x = y = z = 0)$.

Note that the hypersurface $(w = 0)$ with respect to the coordinates of (\ref{eq:cp3})
becomes $(w + 5y^3 + 3yz = 0)$ in the new coordinates used in 
\eqref{eq:newcoords}.

\section{Limits of triple normal crossings points}\label{sec:proof}

A proof of Theorem \ref{thm:char} will be given in this section. Item (1)
of the theorem is proved in \emph{Resolution except for minimal singularities I} (see \cite[Lemma 3.4]{BMminI}), so we only have to prove (2) and (3). 

\subsection{Normal forms}\label{subsec:proof1}
The following lemma isolates parts of Theorem \ref{thm:char} (2),(3) that are proved 
in this subsection. The proof of the theorem is completed in \S\ref{subsec:proof2}.

\begin{lemma}\label{lem:char}
With the hypotheses of Theorem \ref{thm:char}, assume that $f$ does not split (i.e., does
not have three
local analytic branches) at $a=0$. Then we have the following conclusions. (The statements
following are enumerated as in Theorem \ref{thm:char}.)
\begin{enumerate}
\item[(2)] The following are equivalent:
\smallskip
\begin{enumerate}
\item[(b)] $f(v^2,x,y,z)$ has three analytic branches at $0$;
\item[(c)] after an \'etale coordinate change preserving $(w=0)$, we can write $f(w,x,y,z)$ 
either as
\begin{equation}\label{eq:2norm1}
\left(z+w^\al x\right)\left(z^2 + w^{2\al +1}\left(x\xi + w^\be y\right)^2\right),
\end{equation}
where $\xi = \xi(w,x,y)$, or as
\begin{equation}\label{eq:2norm2}
\left(z+w^\al\left(y\eta + w^\be x\right)\right)\left(z^2 + w^{2\al +1}y^2\right),
\end{equation}
where $\eta = \eta(w,x,y)$.
\smallskip
\end{enumerate}
\item[(3)] The following are equivalent:
\smallskip
\begin{enumerate}
\item[(b)] $f(v^3,x,y,z)$ has three analytic branches at $0$;
\item[(c)] after an \'etale coordinate change preserving $(w=0)$, we can write $f(w,x,y,z)$ as
\begin{equation}\label{eq:3norm}
z^3 - 3w^\be y\left(y\eta + w^\g x\right)z + w^\al y^3 + w^{3\be - \al}\left(y\eta + w^\g x\right)^3,
\end{equation}
where $\eta = \eta(w,x,y)$, $2\al < 3\be$ and $\al$ is not divisible by $3$. (In particular,
$f$ is irreducible.)
\end{enumerate}
\end{enumerate}
\end{lemma}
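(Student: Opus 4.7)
The plan is to prove each of the equivalences (2)(b)$\Leftrightarrow$(2)(c) and (3)(b)$\Leftrightarrow$(3)(c) separately, since they correspond to different factorization patterns of $f$. The direction (c)$\Rightarrow$(b) is in both cases a direct algebraic verification. For (2)(c)$\Rightarrow$(2)(b), the quadratic factor of \eqref{eq:2norm1} becomes $z^2+(v^{2\al+1}(x\xi+v^{2\be}y))^2$ after $w=v^2$, which splits as a sum of two squares over the algebraically closed field $\uk$; the case of \eqref{eq:2norm2} is analogous. For (3)(c)$\Rightarrow$(3)(b), setting $P=v^\al y$ and $Q=v^{3\be-\al}(y\eta+v^{3\g}x)$ (which lie in $\uk\lbr v,x,y\rbr$ because $3\nmid\al$ and $2\al<3\be$), the normal form \eqref{eq:3norm} becomes $z^3-3PQz+P^3+Q^3=(z+P+Q)(z+\ep P+\ep^2Q)(z+\ep^2P+\ep Q)$ with $\ep$ a primitive cube root of unity.

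For the reverse direction (b)$\Rightarrow$(c), the common first step is a Galois-theoretic analysis of the cover $\uk\lbr w,x,y,z\rbr\subset\uk\lbr v,x,y,z\rbr$. The three analytic branches of $f(v^k,\cdot)$ group into $\IZ/k\IZ$-orbits whose sizes divide $k$, so the hypothesis that $f$ does not split forces $f$ to have exactly two branches when $k=2$ and to be analytically irreducible when $k=3$. In case (2), Weierstrass preparation with respect to $z$ writes $f=L\cdot Q$ with $L$ linear and $Q$ an irreducible monic quadratic, and completing the square in $Q$ by the étale shift $z\mapsto z-Q_1/2$ gives $f=(z+L_0)(z^2+Q_0)$ with $L_0,Q_0\in\uk\lbr w,x,y\rbr$. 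A parity analysis of the $v$-expansion of the square root of $-Q_0(v^2,\cdot)$ (the even-in-$v$ case is ruled out by irreducibility of $Q$) then yields $Q_0=-\tfrac{w}{4}U^2$ for some $U\in\uk\lbr w,x,y\rbr$.

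In case (3), the Galois generator $v\mapsto\ep v$ cyclically permutes the three roots of $f(v^3,\cdot)=\prod_{i=0}^{2}(z+\phi_i(v,x,y))$, so after reordering $\phi_i(v,\cdot)=\phi_0(\ep^iv,\cdot)$. Decomposing $\phi_0=\phi^{[0]}+\phi^{[1]}+\phi^{[2]}$ according to the residue modulo $3$ of its $v$-exponents, the hypothesis $A=0$ forces $\phi^{[0]}=0$. Setting $P=\phi^{[1]}$ and $Q=\phi^{[2]}$ and applying elementary symmetric function identities yields $f=z^3-3PQz+P^3+Q^3$; after possibly swapping $P\leftrightarrow Q$, one has $P=v\tilde{P}(w,x,y)$ and $Q=v^2\tilde{Q}(w,x,y)$ with $\tilde{P},\tilde{Q}\in\uk\lbr w,x,y\rbr$.

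The main obstacle is translating the invariant hypothesis $\inv(0)=\inv(\mathrm{nc}3)=(3,0,1,0,1,0,\infty)$ into the precise $w$-orders and $(x,y)$-initial forms of $L_0,U$ (case (2)) and of $\tilde{P},\tilde{Q}$ (case (3)). This requires computing the coefficient ideal of $f$ on the maximal-contact hypersurface $(z=0)$ and its iterated successors, extracting the exceptional-monomial factor (a power of $w$) at each stage, and verifying that the residual orders $\nu_2=\nu_3=1$ together with $s_2=s_3=0$ pin down the stated structure. In case (2) this gives $\ord_w(L_0)=\ord_w(U)=:\al$ with linear $(x,y)$-initial forms that, after an étale change preserving $(w=0)$, produce either \eqref{eq:2norm1} or \eqref{eq:2norm2}, according to whether the leading direction of $L_0/w^\al$ is renamed to $x$ or $y$. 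In case (3) the invariant forces $\tilde{P}=w^{(\al-1)/3}y\cdot u$ and $\tilde{Q}=w^{(3\be-\al-2)/3}(y\eta+w^\g x)\cdot u'$ for units $u,u'$ and integers $\al,\be,\g$; non-divisibility of $\al$ by $3$ is forced by the hypothesis that $f$ itself does not split (otherwise $P\in\uk\lbr w,x,y\rbr$), while $2\al<3\be$ reflects the choice of $P$ as the $v$-lower-order component. Absorbing $u,u'$ into further étale changes of $x,y$ completes the normalization \eqref{eq:3norm}.
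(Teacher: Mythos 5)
Your route coincides with the paper's in its structural part: (c)$\Rightarrow$(b) by direct verification; for (b)$\Rightarrow$(c), exploit the action of $v\mapsto -v$ (resp.\ $v\mapsto \ep v$) on the branches of $f(v^2,\cdot)$ (resp.\ $f(v^3,\cdot)$), use the parity (resp.\ mod-$3$ Fourier) decomposition to reach the intermediate factorizations $f=(z+L_0)\bigl(z^2-wU^2\bigr)$ and $f=z^3-3PQz+P^3+Q^3$ with $P=v\tilde P(w,x,y)$, $Q=v^2\tilde Q(w,x,y)$, and then let the hypothesis $\inv(a)=\inv(\mathrm{nc}3)$ pin down orders via the coefficient ideal on the maximal contact $(z=0)$. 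That is exactly the paper's plan, and your case (3) outline (the coefficient ideal generated by the two $w$-monomials times $\zeta_i^3$, the lower-order one having an order-one residual taken as $y$, the next entry of $\inv$ forcing the other to be $y\eta+w^\g x$ after factoring $w$, with $3\nmid\al$ and $2\al<3\be$ coming out of the congruences) is consistent with the paper's argument, though asserted rather than carried out.

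The genuine problem is in your case (2) invariant step. You claim that the hypothesis forces $\ord_w(L_0)=\ord_w(U)=\al$ with \emph{both} residuals having linear $(x,y)$-initial forms, and that the two normal forms arise merely by renaming the leading direction of $L_0/w^\al$ to $x$ or $y$. That is stronger than what the invariant gives and is inconsistent with the normal forms themselves: in \eqref{eq:2norm1} the quadratic factor's residual $x\xi+w^\be y$ is allowed to have order $\geq 2$ (e.g.\ $\xi=x$, $\be=1$) and its $w$-order can exceed that of the linear factor (e.g.\ $\xi=0$, $\be>0$); such $f$ satisfy (b) by your own (c)$\Rightarrow$(b) verification, so your symmetric claim cannot be a consequence of (b) together with hypotheses (i)--(iii). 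What the hypothesis $\inv(a)=(3,0,1,\ldots)$ actually yields is that the first coefficient (marked) ideal, which is equivalent to $(L_0,1)+(wU^2,2)$, has maximal order after factoring out the exceptional monomial $w^\gamma$; hence \emph{exactly one} alternative is guaranteed --- either $L_0$ has order $1$ after factoring $w$, which gives \eqref{eq:2norm1}, or $U$ has order $1$ after factoring $w$, which gives \eqref{eq:2norm2}. This either/or, which is the substantive content of the step and the reason the lemma lists two normal forms, is missing from your argument and is replaced by an incorrect symmetric statement; as written, the derivation of (2)(c) does not go through. Repairing it amounts to inserting the coefficient-ideal dichotomy just described (as the paper does), after which the rest of your sketch is fine.
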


\begin{proof}
In both (2) and (3), it is clear that (c) $\Rightarrow$ (b). So in each case we will
assume (b) and prove (c). We can assume that 
\begin{equation}\label{eq:tzirn}
f(w,x,y,z) = z^3 + B(w,x,y)z + C(w,x,y),
\end{equation}
and that $V(z) = (z=0)$ is a maximal contact hypersurface at $a=0$.
\smallskip

(2) For any root $\al(v,x,y)$ of $f(v^2,x,y,z)=0$, $\al(-v,x,y)$ is also a root.
Therefore $f(v^2,x,y,z)$ has the form
\begin{equation*}
f(v^2,x,y,z) = (z + \al(v,x,y))(z + \al(-v,x,y))(z + 2\be(v^2,x,y)),
\end{equation*}
where
\begin{equation*}
\be(v^2,x,y) = -\frac{1}{2}(\al(v,x,y) + \al(-v,x,y)),
\end{equation*}
by \eqref{eq:tzirn}. 

Now, we can write
$$
\frac{1}{2}(\al(v,x,y) - \al(-v,x,y)) = v\g(v^2,x,y).
$$
Therefore,
\begin{equation*}
\al(v,x,y) = -\be(v^2,x,y) + v\g(v^2,x,y),
\end{equation*}
and
\begin{equation*}
f(v^2) = (z + 2\be(v^2))(z - \be(v^2) + v\g(v^2))
                     (z - \be(v^2) - v\g(v^2))
\end{equation*}
(where $f(v^2)$ means $f(v^2,x,y,z)$, etc.). Therefore, after a change of coordinates
$z' = z - \be(w,x,y)$,
we can write $f$ in the form
\begin{equation}\label{eq:2factors}
f(w,x,y,z') = \left(z' + \be'(w,x,y)\right)\left((z')^2 - w\g'(w,x,y)^2\right).
\end{equation}

It follows from \eqref{eq:2factors} that the first coefficient (marked) ideal is 
equivalent to $(\be', 1) + (w(\g')^2, 2)$, and that this marked
ideal is of maximal order after factoring $w$ (because $\inv(a) = (3,0,1,\ldots)$). 
Therefore, either $\be'$ has order $1$ after factoring $w$ and we get the normal form \eqref{eq:2norm1},
or $\g'$ has order $1$ after factoring $w$ and we get \eqref{eq:2norm2}.
\smallskip

(3) Since $f$ does not split but $f(v^3,x,y,z)$ splits at $a$, it follows that
the latter factors as 
\begin{align*}
f(v^3,x,y,z) &= (z + \al(v,x,y))(z + \al(\ep v,x,y))(z + \al(\ep^2 v,x,y))\\
                    &= XYZ, \,\, \text{say},
\end{align*}
where $\ep = e^{2\pi i/3}$. Define $y_0, y_1, y_2$ by the formulas (\ref{eq:diag}). Then
\begin{align*}
y_0 &= z\\
y_1 &= \frac{1}{3}\left(\al(v,x,y) + \ep\al(\ep v,x,y) + \ep^2  \al(\ep^2v,x,y)\right)\\
y_2 &= \frac{1}{3}\left(\al(v,x,y) + \ep^2\al(\ep v,x,y) + \ep  \al(\ep^2v,x,y)\right).
\end{align*}

Clearly, $vy_1$ and $v^2y_2$ are $\IZ_3$-invariant (with respect to the action on
the $v$-variable), so that
\begin{align*}
vy_1 &= \eta_1(v^3,x,y)\\
v^2y_2 &= \eta_2(v^3,x,y),
\end{align*}
and we can write
\begin{align*}
y_1 &= v^{3m+2} \zeta_1(v^3,x,y)\\
y_2 &= v^{3n+1} \zeta_2(v^3,x,y),
\end{align*}
where $m,n \geq 0$.

Now consider 
\begin{align*}
\zeta_1 &= \zeta_1(w,x,y)    &y_1 = w^{m+2/3} \zeta_1\\
\zeta_2 &= \zeta_2(w,x,y)    &y_2 = w^{n+1/3} \zeta_2;
\end{align*}
both $\zeta_1$ and $\zeta_2$ are in the ideal generated by $x, y$. By (\ref{eq:inverse}),
$$
f(w,x,y,z) = z^3 + w^{3n+1}\zeta_2^3 + w^{3m+2}\zeta_1^3 - 3 w^{m+n+1}\zeta_1 \zeta_2 z.
$$

Then the first coefficient ideal is equivalent to
$\left((w^{3m+2}\zeta_1^3, w^{3n+1}\zeta_2^3), 3\right)$ 
(cf.\cite[Ex.\,5.13]{BMminI}). 
Set $\al = \min\{3m+2, 3n+1\}$. Let $\zeta_1'$ denote the corresponding $\zeta_i$,
and $\zeta_2'$ the other.
Since $\inv(a) = (3,0,1,\ldots)$, $\zeta_1'$ has order $1$ at $a$, and we can assume that
$\zeta_1' = y$. Since $\inv(a) = (3,0,1,0,1,\ldots)$, it follows that 
$\zeta_2'|_{(y=0)}$ has order $1$ after dividing by $w$ as much as possible. By a further
coordinate change, we get \eqref{eq:3norm}, where $\be = m+n+1$. Note that
\eqref{eq:3norm} would split if $\al$ were divisible by $3$.
\end{proof}

\subsection{Splitting lemmas}\label{subsec:proof2}

In this subsection, we complete the proof of Theorem \ref{thm:char}. We use the
notation of the latter. We can also assume that
\begin{equation}\label{eq:tzirn1}
f(w,x,y,z) = z^3 - 3B(w,x,y)z + C(w,x,y),
\end{equation}
and that $V(z) = (z=0)$ is a maximal contact hypersurface at $a=0$. Set
\begin{equation*}
\D := C^2 - 4B^3;
\end{equation*}
i.e., $-27\D$ is the discriminant of $f$ as a polynomial in $z$. Then the first
coefficient (marked) ideal is 
$$
I := \left((B^3, C^2), 6\right) =  \left((C^2, \D), 6\right).
$$
Since $\inv(a) = (3,0,1,\ldots)$, we have $I = w^\g \tI$, where $\tI$ has order
6 at $a=0$.

The coordinate system $(w,x,y,z)$ induces an identification of the completed
local ring $\wcO_{Z,a}$ with the formal power series ring $\uk\llbr w,x,y,z\rrbr$.
Let $\uk\llb w\rrb$ denote the field of fractions of $\uk\llbr w\rrbr$, and let 
$\overline{\uk\llb w\rrb}$ denote the algebraic closure of $\uk\llb w\rrb$. 
Then 
$\overline{\uk\llb w\rrb}$ is the field of formal Puiseux 
series in $w$ over $\uk$; i.e.,
formal Laurent series over $\uk$ in $w^{1/n}$, with finitely many negative exponents, 
where $n$ ranges over the nonnegative integers. Set
\begin{align*}
R &:= \uk\llbr w,x,y\rrbr,\\
S &:= \overline{\uk\llb w\rrb}\llbr x,y\rrbr.
\end{align*}

Then $f$ splits in $S[z]$; say,
$$
f = (z + \al_0)(z + \al_1)(z + \al_2).
$$
Moreover, each $\al_j$ belongs to the ideal $(x,y)$ generated by $x$ and $y$ in $S$,
by the normal crossings hypothesis (ii) in Theorem \ref{thm:char}. 
Define
\begin{equation*}
\eta_i := \frac{1}{3}\sum_{j=0}^2 \ep^{ij}(z+ \al_j), \quad i = 0,1,2,
\end{equation*}
where $\ep =  e^{2\pi i/3}$ (cf. \eqref{eq:diag}). Then $\eta_0 = z$ and
\begin{align}\label{eq:factors}
f &= \prod_{i=0}^2\left(z + \ep^i\eta_1 + \ep^{2i}\eta_2\right)\\
  &= z^3 -3\eta_1\eta_2 z + \eta_1^3 + \eta_2^3 \notag
\end{align}
in $S[z]$ (cf. \eqref{eq:inverse}). In particular,
$$
B = \eta_1\eta_2, \quad C = \eta_1^3 + \eta_2^3, \quad \D = \left(\eta_1^3 - \eta_2^3\right)^2
$$
in $S$. The preceding notation will be used throughout this section.

\begin{lemma}\label{lem:disc}
Under the hypotheses of Theorem \ref{thm:char},
$\D$ factors in a sufficiently small \'etale neighbourhood of $a$ as
$$
\D = \Phi^2 \Psi,
$$
where $\Psi$ is not in the ideal generated by $x,y$.
\end{lemma}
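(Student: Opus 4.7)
My plan is to exploit the identity $\D = (\eta_1^3 - \eta_2^3)^2$ already in hand over $S$, and to descend it to $R$ using only the $w$-adic valuation, bypassing any fine analysis of the Galois group of $f$. Set $\Phi_S := \eta_1^3 - \eta_2^3 \in S$ and extend the $w$-adic valuation from $R$ to $S$ in the Puiseux way, taking values in $\IQ_{\geq 0}$. Let $\nu := v_w(\Phi_S)$; since $\Phi_S^2 = \D \in R$ forces $v_w(\D) = 2\nu$ to be a nonnegative integer, one has $\nu \in \tfrac{1}{2}\IZ_{\geq 0}$. Write $\Phi_S = w^\nu\rho$ with $\rho \in S$ and $v_w(\rho) = 0$.

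The crux of the argument will be the claim that $\rho$ actually lies in $R$. Since $\Phi_S$ is algebraic over $\mathrm{Frac}(R)$ of degree bounded by the order of the Galois group of the splitting field of $f$, $\rho$ lies in $\uk\llbr w^{1/N}\rrbr\llbr x,y\rrbr$ for some finite $N$; expand
\[
\rho = \sum_{k\geq 0} \rho_k w^{k/N}, \qquad \rho_k \in \uk\llbr x,y\rrbr,
\]
with $\rho_0 \neq 0$ (from $v_w(\rho) = 0$). Since $\rho^2 = \D/w^{2\nu} \in R$, the coefficient of $w^{m/N}$ in $\rho^2$ vanishes whenever $N \nmid m$. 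I will show by induction on such $m$ that $\rho_m = 0$: given the inductive hypothesis for $0 < m' < m$, any cross-term $\rho_i\rho_{m-i}$ with $0 < i < m$ appearing in $\sum_{i+j=m}\rho_i\rho_j$ would force $N \mid i$ and $N \mid (m-i)$ (otherwise one factor is zero by the inductive hypothesis), hence $N \mid m$, contrary to assumption; so the sum collapses to $2\rho_0\rho_m$, and $\rho_0 \neq 0$ together with $\car\uk = 0$ yields $\rho_m = 0$. Hence the Puiseux expansion of $\rho$ contains only integer powers of $w$, i.e., $\rho \in R$.

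With $\rho \in R$ the lemma follows by parity. If $2\nu$ is even I take $\Phi := w^\nu\rho \in R$ and $\Psi := 1$; if $2\nu = 2n+1$ is odd I take $\Phi := w^n\rho \in R$ and $\Psi := w$. In either case $\Psi \in \{1,w\}$ has nonzero restriction to the $w$-axis $(x=y=0)$, so $\Psi \notin (x,y)R$, as required.

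The only nontrivial external input is the splitting of $f$ in $S[z]$, set up in the preamble to the lemma; this relies on hypothesis (ii) and the triple vanishing $f \equiv z^3 \pmod{(x,y)}$. Given that splitting, the descent from $S$ to $R$ is purely formal, and the main technical point to verify carefully is the inductive bookkeeping in the middle paragraph, namely that no pair $(i,m-i)$ with $0<i<m$ can simultaneously satisfy $N\mid i$ and $N\mid(m-i)$ when $N\nmid m$.
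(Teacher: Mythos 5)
There is a genuine gap, and it is not the inductive bookkeeping you flag at the end (that induction is fine); it is the step just before it: the claim that, because $\Phi_S=\eta_1^3-\eta_2^3$ is algebraic over $\mathrm{Frac}(R)$ of bounded degree, $\rho$ (equivalently $\Phi_S$) lies in $\uk\llbr w^{1/N}\rrbr\llbr x,y\rrbr$, so that $\nu=v_w(\Phi_S)$ is finite, lies in $\tfrac12\IZ_{\geq 0}$, and satisfies $2\nu=\ord_w\D$. Membership in $S=\overline{\uk\llb w\rrb}\llbr x,y\rrbr$ controls nothing about the $w$-denominators or $w$-integrality of the higher coefficients, and algebraicity does not help: $\sqrt{w^2+x}=\sum_k\binom{1/2}{k}w^{1-2k}x^k$ lies in $\uk\llb w\rrb\llbr x\rrbr\subset S$ and is integral over $R$ (it is a root of $T^2-(w^2+x)$), yet its coefficients have unboundedly negative $w$-order, so its Puiseux--Gauss ``valuation'' is $-\infty$ while its square has $w$-order $0$. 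A Galois-orbit argument does give that the coefficients of $\Phi_S$ lie in a single finite extension $\uk\llb w^{1/N}\rrb$, but it does not give nonnegativity of the exponents, and that $w$-integrality is precisely the substance of what has to be proved.

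Moreover, the gap cannot be closed from the inputs you allow yourself, because your intended conclusion ($\Psi\in\{1,w\}$, i.e.\ $\D$ is a square or $w$ times a square) is strictly stronger than the lemma and genuinely requires hypothesis (iii) of Theorem \ref{thm:char}, which you never use. The paper's own example in Remark \ref{rem:abjung}(2), $f=(z+x)\left(z^2+(w+y)y^2\right)$, satisfies (i) and (ii) and splits in $S[z]$ with roots in $(x,y)S$ --- everything your argument consumes --- yet $\D\doteq -4\,y^2(w+y)\left(x^2+y^2(w+y)\right)^2$, whose squarefree part is $w+y$: it is not in $(x,y)$ (as the lemma asserts) but is neither $1$ nor $w$ up to units and squares; correspondingly $\Phi_S$ involves $\sqrt{w+y}=w^{1/2}\sum_k\binom{1/2}{k}w^{-k}y^k$, exactly the phenomenon that invalidates your valuation step. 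The paper's proof is different and much softer: by hypothesis (ii), $\D$ is a square at the generic point of the $w$-axis (in the completion $\uk\llb w\rrb\llbr x,y\rrbr$ of $R_{(x,y)}$, after a finite extension of $\uk\llb w\rrb$, which is unramified there), hence every irreducible factor of $\D$ lying in $(x,y)$ occurs to even multiplicity, which is all that is claimed; the finer ``$w$-monomial'' structure of $\D$ is only established later, in Lemmas \ref{lem:Dsquare}--\ref{lem:ordcoeff2}, where $\inv(a)=\inv(\mathrm{nc}3)$ enters in an essential way.
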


\begin{proof}
By the normal crossings hypothesis (ii) in Theorem \ref{thm:char}, $\D$ is
a square at the generic point of $(x=y=0)$. The assertion follows.
\end{proof}

Given $\theta \in R$, let $\ord\, \theta$ denote the order of $\theta$ with respect to
the maximal ideal $(w,x,y)$, and let $\ord_{(x,y)} \theta$ denote the order with
respect to the ideal $(x,y)$. Thus, $\ord\, \theta > 0$ if and only if
$\theta$ is not a unit in $R$, and $\ord_{(x,y)} \theta > 0$ if and only if
$\theta$ is not a unit in $S$.

We will prove the following three lemmas, \emph{all under the hypotheses of
Theorem \ref{thm:char}}.

\begin{lemma}\label{lem:Dsquare}
Assume that $\D$ is a square in $R$. Then $f(v^3, x,y,z)$ splits at $a=0$.
\end{lemma}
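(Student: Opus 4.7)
The plan is to show $\eta_1, \eta_2 \in \tilde R := \uk\llbr v, x, y\rrbr$ under $w = v^3$, so that the factorization $f = \prod_{i=0}^2(z + \ep^i\eta_1 + \ep^{2i}\eta_2)$ from \eqref{eq:factors} descends to a splitting of $f(v^3, x, y, z)$ over $\tilde R$. First, since $\Delta = \theta^2 \in R$ and $\Delta = (\eta_1^3 - \eta_2^3)^2$ in $S$, we have $\eta_1^3 - \eta_2^3 = \pm\theta \in R$; with $\eta_1^3 + \eta_2^3 = C \in R$ and $\car\uk = 0$, this gives $c_i := \eta_i^3 \in R$ for $i = 1, 2$.

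\smallskip

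Write $\eta_i = \sum_{n\geq 1}\eta_i^{(n)}$ with $\eta_i^{(n)} \in \overline{\uk\llb w\rrb}[x,y]_n$. Hypothesis (ii) of Theorem \ref{thm:char} ensures $\eta_1^{(1)}\eta_2^{(1)} \neq 0$, since the degree-$3$-in-$(x,y,z)$ part $z^3 - 3B_2z + C_3$ of $f$ must factor as three distinct linear forms at a generic point of the $w$-axis, forcing $B_2 = \eta_1^{(1)}\eta_2^{(1)}$ nonzero. Setting $\eta_i^{(1)} = a_ix + b_iy$, the constraint $(\eta_i^{(1)})^3 = [c_i]^{(3)} \in \uk\llbr w\rrbr[x,y]_3$ forces $a_i^3, a_i^2b_i, a_ib_i^2, b_i^3 \in \uk\llbr w\rrbr$, and taking cube roots (legitimate since $\uk$ is algebraically closed) places $a_i, b_i$ in $\uk\llbr w^{1/3}\rrbr$ with a common fractional part $r_i/3$ of their $w$-Puiseux exponents, $r_i \in \{0,1,2\}$. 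Thus $\eta_i^{(1)} = w^{r_i/3}L_i$ for a nonzero $L_i \in \uk\llbr w\rrbr[x,y]_1$, and $\eta_i^{(1)}(v^3, x, y) = v^{r_i}L_i(v^3, x, y) \in \tilde R[x,y]_1$.

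\smallskip

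Extend by induction on $n$. Comparing $(x, y)$-degree-$(n+2)$ parts of $\eta_i^3 = c_i$ gives
\begin{equation*}
3(\eta_i^{(1)})^2\eta_i^{(n)} = [c_i]^{(n+2)} - \Bigl[\Bigl(\sum_{k<n}\eta_i^{(k)}\Bigr)^3\Bigr]^{(n+2)}.
\end{equation*}
Injectivity of multiplication by the nonzero polynomial $(\eta_i^{(1)})^2 = v^{2r_i}L_i(v^3, x, y)^2$ on $\overline{\uk\llb v\rrb}[x, y]$ pins down $\eta_i^{(n)}$, and the right-hand side lies in $\tilde R[x,y]_{n+2}$ by induction. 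The main obstacle is confirming $\eta_i^{(n)} \in \tilde R[x,y]_n$: dividing by $(\eta_i^{(1)})^2$ could a priori introduce $v$-denominators or non-polynomial quotients by $L_i^2$. The resolution requires strengthening the induction to $\eta_i^{(k)} \in v^{r_i}\tilde R[x,y]_k$ for all $k$, equivalently $v_w(\eta_i^{(k)}) \geq r_i/3$. This uniform Puiseux-valuation bound propagates from the base case $v_w(\eta_i^{(1)}) = r_i/3$ via careful tracking of minimum-valuation coefficients in the convolution $\eta_i^3 = c_i$: a no-cancellation argument shows that any coefficient of $\eta_i$ with $w$-valuation below $r_i/3$ would produce a non-cancellable term (at a monomial of the form $x^{3\beta_1}y^{3\beta_2}$ for an extreme $\beta$) in $c_i = \eta_i^3$, contradicting $c_i \in R$.
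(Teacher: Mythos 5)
Your opening reduction is fine and matches the paper's starting point: from $\D=\theta^2$ you get $\eta_1^3-\eta_2^3=\pm\theta\in R$, hence $c_i=\eta_i^3\in R$. But from there the proof does not close: the whole difficulty of the lemma is concentrated in the inductive step you yourself flag as ``the main obstacle,'' and the argument you offer for it is both unproved and, as a general principle, false. First, ``$\eta_i^{(k)}\in v^{r_i}\uk\llbr v,x,y\rrbr[x,y]_k$, equivalently $v_w(\eta_i^{(k)})\geq r_i/3$'' is not an equivalence: the membership statement requires every $w$-exponent of every coefficient to lie in $r_i/3+\IZ_{\geq 0}$, whereas the valuation bound says nothing about exponents such as $1/2$; the assertion that the Puiseux exponents of the roots have denominator dividing $3$ is essentially the content of the lemma and cannot be absorbed into an ``equivalently.'' Second, the claimed no-cancellation principle --- that a coefficient of $\eta_i$ of valuation below $r_i/3$ forces a term in $c_i=\eta_i^3$ incompatible with $c_i\in R$ --- is false with only the data you actually use ($c_i\in R$ and $\eta_i^{(1)}\neq 0$). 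Take $\eta=x(w+3y)^{1/3}=\sum_{k\geq 0}\binom{1/3}{k}3^k\,w^{1/3-k}\,x\,y^k\in (x,y)S$: its linear part is $w^{1/3}x$ (so $r=1$), its degree-$2$ part is $w^{-2/3}xy$, of valuation $-2/3<1/3$, and yet $\eta^3=x^3(w+3y)\in R$; moreover $\eta(v^3,x,y)\notin\uk\llbr v,x,y\rrbr$. So no contradiction with $c_i\in R$ can come from this kind of bookkeeping alone, and your induction cannot be completed without importing more of the hypotheses.

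What is missing is exactly the input the paper uses and your proposal never touches: hypothesis (iii) of Theorem \ref{thm:char}, $\inv(a)=\inv(\mathrm{nc}3)$, through the order-$6$ companion ideal. The paper shows, via Lemma \ref{lem:ordgeom}, that $w^{-\al}C$ and $w^{-\al}A$ are relatively prime in $R$; hence the coprime factors $C\pm A=2\eta_{1,2}^3$ of $4B^3$ (after factoring out $w$) are each a power of $w$ times a cube in the UFD $R$, and the substitution $w=v^3$ then splits $f$. It is precisely this step that excludes configurations like $\eta^3=x^3(w+3y)$ above, so some use of (iii) (or of the coprimality it buys) is unavoidable. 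The congruence half of your claim --- that all exponents lie in $\tfrac13\IZ$ --- could in fact be rescued cheaply, e.g.\ by observing that any automorphism of $\overline{\uk\llb w\rrb}$ over $\uk\llb w\rrb$ multiplies each $\eta_i$ by a cube root of unity; but the nonnegativity/integrality half genuinely needs the coprimality-and-unique-factorization argument (or an equivalent use of the invariant), so the gap is essential rather than expository.
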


\begin{lemma}\label{lem:ordcoeff1}
Assume that $\ord\, B^3 > \ord\, C^2$. Then $\D$ is a square in $R$.
\end{lemma}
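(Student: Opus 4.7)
The plan is to show that the quadratic $P(T) := T^2 - CT + B^3 \in R[T]$---whose roots in $S$ are $\eta_1^3$ and $\eta_2^3$---splits over $R$. Since $\D = (\eta_1^3 - \eta_2^3)^2$ and $\eta_1^3 + \eta_2^3 = C \in R$ already, splitting $P$ over $R$ places $\eta_1^3$ and $\eta_2^3$ individually in $R$, so that $\sqrt{\D} = \eta_1^3 - \eta_2^3 \in R$ and we are done.

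I would tackle the splitting by a Newton-polygon/Hensel argument with respect to the $w$-adic valuation $v_w$, exploiting that $R = \uk\llbr x,y \rrbr \llbr w \rrbr$ is $(w)$-adically complete. The Newton polygon of $P(T)$ in $T$ has vertices $(0, 3 v_w(B))$, $(1, v_w(C))$, $(2, 0)$, and breaks into two segments of distinct slopes precisely when $2 v_w(C) < 3 v_w(B)$. In that regime, Hensel's lemma produces a factorization of $P$ over $R$ into two distinct linear factors, whose roots---distinguished by their $w$-adic valuations---are $\eta_1^3$ and $\eta_2^3$.

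The main obstacle is verifying the $w$-adic inequality $2 v_w(C) < 3 v_w(B)$: the hypothesis $\ord B^3 > \ord C^2$ is only an $(w,x,y)$-adic statement and is a priori strictly weaker (for instance, the pair $B=x^{100}$, $C=w$ satisfies the ord inequality but not the $v_w$-one). The invariant hypothesis $\inv(a) = \inv(\mathrm{nc}3) = (3,0,1,0,1,0,\infty)$ must therefore be used crucially. Writing $B = w^b B_1$, $C = w^c C_1$ with $w \nmid B_1, C_1$, the marked coefficient ideal $(B^3, C^2) = w^{\min(3b, 2c)} \widetilde I$ has $\ord \widetilde I = 6$; the order hypothesis $\ord B^3 > \ord C^2$ forces the minimum realizing $\ord \widetilde I = 6$ to be attained by the $C^2$-generator, and the deeper residual multiplicities $\nu_3 = \nu_4 = 1$ of $\inv$ impose rigid compatibility between the remaining $(x,y)$-factors of $B_1$ and $C_1$. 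Together these constraints pin down $2c < 3b$. With that inequality in hand, the Newton-polygon Hensel argument factors $P$ over $R$, yielding $\eta_1^3, \eta_2^3 \in R$ and therefore $\sqrt{\D} \in R$.
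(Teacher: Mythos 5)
Your reduction of the lemma to splitting $P(T)=T^2-CT+B^3$ over $R$ is fine (in characteristic zero the roots are $\eta_i^3=(C\pm\sqrt{\D})/2$, so splitting $P$ over $R$ is equivalent to $\D$ being a square in $R$), but the proposed mechanism breaks at the Hensel step. Hensel's lemma for the $(w)$-adically complete ring $R=\uk\llbr x,y\rrbr\llbr w\rrbr$ requires the approximate factorization modulo $(w)$ to be into \emph{comaximal} factors, i.e.\ the relevant resultant must be a unit of the residue ring $R/(w)=\uk\llbr x,y\rrbr$ --- and that residue ring is not a field. After the Newton-polygon rescaling $T=w^{v_w(C)}U$, the reduction mod $w$ is $U\,(U-\bar C_1)$, where $\bar C_1$ is a \emph{non-unit} of $\uk\llbr x,y\rrbr$ (both $B$ and $C$ vanish identically on the $w$-axis by hypothesis (ii)), so comaximality fails and the two distinct slopes only give a splitting over $\mathrm{Frac}\bigl(\uk\llbr x,y\rrbr\bigr)\llbr w\rrbr$, which does not imply that $\D$ is a square in $R$. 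Indeed the implication you rely on, ``$2v_w(C)<3v_w(B)$ forces $P$ to split over $R$'', is false: for $C=wy^3$, $B=wy(y+x)$ one has $2v_w(C)=2<3=3v_w(B)$, yet $\D=C^2-4B^3=w^2y^3\bigl(y^3-4w(y+x)^3\bigr)$ contains the irreducible factor $y$ to odd order, hence is not a square in $R$, although it is a square over $\mathrm{Frac}\bigl(\uk\llbr x,y\rrbr\bigr)\llbr w\rrbr$. So the $(x,y)$-directional information (generic nc$3$ along the $w$-axis, the value of $\inv$) must enter the splitting argument itself, not merely the verification of the $w$-adic inequality. A second, independent gap: the inequality $2v_w(C)<3v_w(B)$ is only asserted; you correctly note that $\ord B^3>\ord C^2$ does not give it, and the appeal to ``rigid compatibility'' imposed by the residual multiplicities is not an argument --- pinning that inequality down is essentially as hard as the lemma itself.

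For comparison, the paper never introduces $v_w$ at all. Since $\ord B^3>\ord C^2$, the order of $I=w^\g\tI$ (with $\ord\tI=6$) forces $\ord C^2=\g+6$, so $\g$ is even, say $\g=2\al$, and $\D=w^{2\al}\widetilde\D$ with $\ord\widetilde\D=6=\ord_{(x,y)}\widetilde\D$. By Lemma \ref{lem:ordgeom} every irreducible factor of $\widetilde\D$ then lies in $(x,y)$, while Lemma \ref{lem:disc} (which encodes exactly the generic-nc$3$ information, namely that $\D$ is a square at the generic point of the $w$-axis) gives $\D=\Phi^2\Psi$ with $\Psi\notin(x,y)$; hence $\Psi$ is a unit times an even power of $w$ and $\D$ is a square in $R$. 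If you want to salvage your route, you would have to import that same geometric input (Lemma \ref{lem:disc}, or the fact that $\eta_1,\eta_2$ generate $(x,y)$ in $S$) into the factorization step, at which point it collapses to the paper's argument.
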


\begin{lemma}\label{lem:ordcoeff2}
Assume that $\ord\, B^3 \leq \ord\, C^2$. Then $f(v^2, x,y,z)$ splits at $a=0$. 
\end{lemma}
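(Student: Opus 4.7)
My plan is to reduce Lemma \ref{lem:ordcoeff2} to two assertions: (i) $f$ is reducible over $R = \uk\llbr w, x, y\rrbr$, and (ii) the residual quadratic factor splits after $w = v^2$.

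For (i), I argue by contradiction. Suppose $f$ is irreducible. Using the nc$3$ hypothesis together with $\inv(a) = \inv(\mathrm{nc}3)$, I claim the Galois group of $f$ over $\mathrm{Frac}(R)$ must be $\IZ_3 = A_3$, so that $\D$ is a square and $f(v^3, x, y, z)$ splits by Lemma \ref{lem:Dsquare}. Then by Lemma \ref{lem:char}(3), $f$ takes the normal form \eqref{eq:3norm} with $2\al < 3\be$, and a direct computation of $B = w^\be y(y\eta + w^\g x)$ and $C = w^\al y^3 + w^{3\be - \al}(y\eta + w^\g x)^3$ yields $\ord B^3 \geq 3\be + 6 > 2\al + 6 = \ord C^2$, contradicting the hypothesis. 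Hence $f = (z - \al)(z^2 + \al z + (\al^2 - 3B))$ for some $\al \in R$.

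For (ii), substituting $w = v^2$, the linear factor already lies in $R'[z]$ with $R' := \uk\llbr v, x, y\rrbr$, so it remains to split the quadratic factor, whose discriminant is $-3(\al^2 - 4B)$. Using the resolvent identity $\D = (\al^2 - 4B)(\al^2 - B)^2$ (immediate from $C = 3B\al - \al^3$) and the factorization $\D = \Phi^2 \Psi$ of Lemma \ref{lem:disc}, the elements $\al^2 - 4B$ and $\Psi$ differ by a square factor in $R$, so it suffices to show that $\Psi$ becomes a square in $R'$. Since $\Psi \notin (x, y)$, one has $\Psi(w, 0, 0) = w^k u(w)$ with $u(0) \neq 0$; the nc$3$ and $\inv$ hypotheses force $\Psi = w^\epsilon V$ with $\epsilon \in \{0, 1\}$ and $V \in R^\times$, giving $\sqrt{\Psi(v^2, x, y)} = v^\epsilon \sqrt{V(v^2, x, y)} \in R'$ by Hensel's lemma (using $\uk$ algebraically closed).

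The two main obstacles are: first, justifying the Galois cyclicity claim in (i) --- one must exclude the possibility that $f$ is irreducible with Galois group $S_3$, which requires careful use of the nc$3$ hypothesis to constrain the monodromy action on the three branches; and second, verifying the monomial-times-unit form $\Psi = w^\epsilon V$ in (ii), which requires tracking the two residual-multiplicity-$1$ pairs in $\inv(a)$ through the coefficient-ideal computation. As a sanity check on the latter, the prototypical example $f = x(z^2 + wy^2)$ (recast in the cubic form by the change $x \mapsto x + z$ and completing the cube) yields $\Psi = w$ directly, consistent with the predicted form.
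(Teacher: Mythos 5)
Your reduction leaves unproved exactly the two steps that you yourself flag as ``obstacles'', and these are where the whole difficulty of the lemma sits; as written, the proposal is a plan rather than a proof. In step (i), the claim that an irreducible $f$ must have Galois group $A_3$ over $\mathrm{Frac}(R)$ is the same as saying that $\D$ is a square in $R$, which via Lemma \ref{lem:Dsquare} is essentially the implication (a)$\Rightarrow$(b) of Theorem \ref{thm:char}(3) --- precisely what Lemmas \ref{lem:Dsquare}--\ref{lem:ordcoeff2} are jointly designed to establish --- and no independent argument is offered; the subsequent order computation and the appeal to Lemma \ref{lem:char}(3)(b)$\Rightarrow$(c) are fine but only bite once that claim is known (note also that your contradiction compares $\ord B^3$ and $\ord C^2$ computed in the normal-form coordinates, while the hypothesis concerns the original $B,C$, and these orders are not coordinate-invariant). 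In step (ii), the assertion that $\Psi = w^\ep V$ with $V$ a unit is again nearly a restatement of the goal: it says that $\D$ is a square up to a monomial in $w$, and nothing in the sketch explains how the invariant controls the multiplicative (squarefree) structure of $\D$ --- the residual multiplicities bound orders of coefficient ideals after factoring the exceptional monomial, and the relevant entries of $\inv(a)$ are attached to ideals built from $B$ and $C$ jointly, not to $\D$ itself. (Incidentally, the restriction $\ep\in\{0,1\}$ is unnecessary, since any $w^\ep$ becomes a square under $w=v^2$.)

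For comparison, the paper extracts the missing information from $B$ rather than from $\D$: writing $B = w^\al u\,(y^2-Px^2)$ by Weierstrass preparation, it shows that the second coefficient ideal reduces to $(Px^2,2)$, so $\inv(a)=(3,0,1,0,1,0,\infty)$ forces $P$ to be a unit times $w^\be$; after substituting $w=v^2$ the two directions $y\pm v^\be x$ lie in $\uk\llbr v,x,y\rrbr$, and a Galois argument using Lemma \ref{lem:preserveunits} shows that the Galois group of $f(v^2,x,y,z)$ over $\mathrm{Frac}(\uk\llbr v,x,y\rrbr)$ has no element of order two, whence $\D(v^2,x,y)$ is a square; Lemma \ref{lem:Dsquare} together with the exclusion of the $f(v^3)$ alternative (using $3\mid\g$ against the normal form of Lemma \ref{lem:char}(3)) then yields the splitting of $f(v^2,x,y,z)$. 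So the Galois-theoretic flavour and the normal-form exclusion you anticipate do occur in the paper, but the actual engine --- monomializing $P$ inside $B$ via the residual multiplicities and transferring this to $\D$ through the absence of order-two Galois elements --- is missing from your proposal, and without it both of your key claims remain unsubstantiated.
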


Theorem \ref{thm:char} is an immediate consequence of the preceding three lemmas
and Lemma \ref{lem:char}. Proofs of Lemmas \ref{lem:Dsquare}--\ref{lem:ordcoeff2}
follow. The latter is the most delicate. 

\begin{proof}[Proof of Lemma \ref{lem:Dsquare}]
Write $\D = A^2 \in R$; we can take $A = \eta_1^3-\eta_2^3$. Recall 
$I = (B^3, C^2) = (\D, C^2)= w^\g \tI$, as above. 
Then  $\g = \min\{\ord_{(w)} A^2, \ord_{(w)} C^2\}$.
Therefore, $\g$ is even; say $\g = 2\al$.

We have $4B^3 = (C-A)(C+A)$.

We claim that $w^{-\al}C$ and $w^{-\al}A$ are relatively prime in R:
It is easy to check they are relatively prime in $S$ since $A = \eta_1^3 - \eta_2^3$, $C = \eta_1^3 + \eta_2^3$, and the ideal $(\eta_1,\eta_2)=(x,y)$ in $S$.
Since $\tilde I$ has order $6$, either $\ord\, w^{-\g} \D = \ord_{(x,y)} w^{-\g} \D$ or 
$\ord\, w^{-\g} C^2 = \ord_{(x,y)} w^{-\g} C^2$. In either case, we can use Lemma
\ref{lem:ordgeom} following to conclude that $w^{-\al}C$, $w^{-\al}A$ are 
relatively prime in R.

Therefore, $w^{-\de}(C-A) = 2w^{-\de}\eta_2^3$ and $w^{-\de}(C+A) = 2w^{-\de}\eta_1^3$
are relatively prime in $R$, where $\de$ denotes the largest power of $w$ that divides
$C-A$ and $C+A$.  Moreover, their product $4 w^{-2\de} B^3$ is a cube times a power of 
$w$ in $R$. Hence both $\eta_1^3$ and $\eta_2^3$ are cubes (times powers of $w$)
in $R$. By \eqref{eq:factors}, $f(v^3,x,y,z)$ splits in $R$ and the result follows.
\end{proof}

\begin{lemma}\label{lem:ordgeom}
Let $G \in R$. Suppose that $\ord\, G = \ord_{(x,y)} G$. Let $\theta \in R$ be a nonunit
which divides $G$. Then $\theta$ is also a nonunit in $S$.
\end{lemma}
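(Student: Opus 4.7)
The plan is to argue by contrapositive: I assume $\theta$ is a unit in $S$ and derive that $\theta$ must be a unit in $R$, contradicting the hypothesis. The workhorse observation is that both the $(w,x,y)$-adic order $\ord$ and the $(x,y)$-adic order $\ord_{(x,y)}$ on $R$ are multiplicative on products of nonzero elements. This holds because the associated graded rings are $\uk[w,x,y]$ and $\uk\llbr w\rrbr[x,y]$ respectively, both polynomial rings over integral domains, so each filtration gives a valuation on $R\setminus\{0\}$.

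Next I would relate unithood in $S$ to vanishing of the $(x,y)$-order. Since $S = \overline{\uk\llb w\rrb}\llbr x,y\rrbr$ is a formal power series ring in $x,y$ over the \emph{field} $\overline{\uk\llb w\rrb}$, an element $\eta \in R \subset S$ is a unit in $S$ precisely when its constant term in $x,y$ is nonzero, i.e.\ when $\eta(w,0,0) \neq 0$ in $\uk\llbr w\rrbr$; equivalently, $\ord_{(x,y)}\eta = 0$. Thus the contradictory assumption is simply $\ord_{(x,y)}\theta = 0$.

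With these preliminaries in place, write $G = \theta\phi$ for some $\phi \in R$ and set $n := \ord G = \ord_{(x,y)} G$. Multiplicativity of $\ord_{(x,y)}$ together with $\ord_{(x,y)}\theta = 0$ yields $\ord_{(x,y)}\phi = n$. Because every monomial $w^k x^i y^j$ satisfies $i+j \leq i+j+k$, one always has the termwise inequality $\ord_{(x,y)}\phi \leq \ord\phi$, so $\ord\phi \geq n$. Multiplicativity of $\ord$ then forces $\ord\theta = \ord G - \ord\phi \leq 0$, hence $\ord\theta = 0$, meaning $\theta$ is a unit in $R$ --- the desired contradiction.

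The only substantive point is verifying multiplicativity of $\ord_{(x,y)}$ by identifying the associated graded of the $(x,y)$-filtration with $\uk\llbr w\rrbr[x,y]$; once that is in hand, the hypothesis $\ord G = \ord_{(x,y)} G$ is exploited in a single line by comparing the two filtrations on the cofactor $\phi$.
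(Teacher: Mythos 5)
Your proof is correct and rests on exactly the same mechanism as the paper's: both order functions $\ord$ and $\ord_{(x,y)}$ are multiplicative, $\ord \geq \ord_{(x,y)}$ holds termwise, and unithood in $S$ for elements of $R$ is detected by $\ord_{(x,y)} = 0$. The only (cosmetic) difference is that the paper runs the comparison over the irreducible factorization of $G$ in $R$, concluding $\ord\,\theta_i = \ord_{(x,y)}\theta_i$ for every irreducible factor, whereas you argue by contrapositive directly on the two-factor decomposition $G = \theta\phi$, which avoids invoking the UFD property but is otherwise the same argument.
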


\begin{proof} 
Consider a decomposition of $G$ into irreducible factors in $R$,
$G = \prod \theta_i^{n_i}$ ,
where the $n_i$ are positive integers. For all $i$, 
$\ord\, \theta_i \geq \ord_{(x,y)} \theta_i$.
By the hypothesis, $\sum n_i \ord\, \theta_i = \sum n_i \ord_{(x,y)} \theta_i$. Therefore,
$\ord\, \theta_i = \ord_{(x,y)} \theta_i$, for all $i$.
The result follows. 
\end{proof}

\begin{proof}[Proof of Lemma \ref{lem:ordcoeff1}]
Since $\ord\, C^2 < \ord\, B^3$, $\ord\, C^2 = \g + 6$. Therefore, $\g$ is even (say $\g = 2\al$),
and $\D = w^{2\al}\widetilde{\D}$, where $\ord\, \widetilde{\D} = 6
= \ord_{(x,y)}\widetilde{\D}$.
By Lemma \ref{lem:disc}, $\D$ is a square in $R$.
\end{proof}

\begin{proof}[Proof of Lemma \ref{lem:ordcoeff2}]
Since $\ord\, B^3 \leq \ord\, C^2$, $\ord\, B^3 = \g+6$. Therefore, $\g$ is divisible by $3$; 
say $\g = 3\al$. Recall that $(z=0)$ is a maximal contact hypersurface for $f$, 
$I = \left((B^3, C^2), 6\right) = w^{\g}\tI$ is the associated coefficient ideal, and $\tI = (\tI, 6)$ is the companion ideal. (The latter has maximal order since $\inv(a)
= (3,0,1,\ldots)$.) By the Weierstrass preparation theorem, 
we can assume that
\begin{equation*}
B = w^\al u ( y^2 - Px^2 ), 
\end{equation*}
where $P = P(w,x)$ and $u$ is a unit.

We will prove that $P$ is a power of $w$ times a unit.
First note that $P$ is a unit in $S$ because $(\eta_1,\eta_2) = (x,y)$ in $S$, so the 
initial form of $B = \eta_1\eta_2$ in $S$ is a non-degenerate quadratic form.
In particular, $P$ has a square root in $S$. Since $S$ is a UFD, we can write
\begin{equation*}
\eta_1 = u_1(y+\sqrt{P}x), \quad \eta_2 = u_2(y-\sqrt{P}x),
\end{equation*}
where $u_1, u_2$ are units of $S$.

Since $\tI \subset (y^2  + Px^2, 2)$, $(y=0)$ is a maximal contact hypersurface for $\tI$ in $(z=0)$.
The associated coefficient ideal of $\tI$ is
\begin{equation*}
J := \left(Px^2,2\right)+ \left(\left.\frac{\partial^2 C}{\partial y^2}\right|_{y=0}, 1 \right) + \left(\left.\frac{\partial C}{\partial y}\right|_{y=0},2 \right) + \left(\left.C\right|_{y=0}3 \right).
\end{equation*}
Since $C = u_1^3(y+\sqrt{P}x)+u_2^3(y-\sqrt{P}x)^3$, a direct computation shows that the marked ideal $J$ reduces to $\left(Px^2,2\right)$.

Since $\inv(a) = (3,0,1,0,1,\ldots)$, the marked ideal $J$ has order $1$ after fully factoring
$w$, i.e., $P$ is a unit times a power $w^\be$ of $w$ in $R$.
Therefore we can assume that 
\begin{equation*}
B = w^\al (y^2 - w^\be x^2),
\end{equation*}
and we can write
\begin{equation*}
\eta_1 = u_1(y+w^{\be/2}x), \quad \eta_2 = u_2(y-w^{\be/2}x).
\end{equation*}

We now substitute $w = v^2$. So we can assume that $B = v^{2\al}(y^2 - v^{2\be} x^2)$.
We will prove that $f(v^2,x,y,z)$ splits.  

First we note that it is enough to prove that $\Delta(v^2,x,y)$ is a square in
$\uk\llbracket v, x, y \rrbracket$:
The latter implies that $f(v^6,x,y,z)$ splits, by Lemma \ref{lem:Dsquare},
and therefore that either $f(v^2,x,y,z)$ or $f(v^3,x,y,z)$ splits, since 
$f$ is a polynomial of degree $3$ in $z$.
Recall that $I = (B^3,C^2) = w^{\g}I'$, where $\g$ is divisible by $3$. 
Then $C$ is divisible by $w^{3\de}$, for some $\de$. If $f(v^3,x,y,z)$ splits,
this would contradict (b) $\Rightarrow$ (c) in Lemma \ref{lem:char}(3).

Let $F$ denote the field of fractions of $R$, $M$ the field of fractions of $S$, and $L$ the subfield of $M$ generated over $F$ by $\eta_1$ and $\eta_2$ --- i.e., the splitting 
field of $f = f(v^2,x,y,z)$.
We consider the Galois group $\Gal_F L$ of $L$ over $F$.

The Galois group $\Gal_F L$ is a subgroup of the symmetric group $\mathfrak S_3$,
where we can view the latter as the group of the permutations of
\[\left\{\eta_1,\ep\eta_1,\ep^2\eta_1,\eta_2,\ep\eta_2,\ep^2\eta_2\right\}\]
preserving the expresions $\eta_1\eta_2$ and $\eta_1^3+\eta_2^3$.

We will prove that $\Gal_F L$ has no element of order $2$.
Consider $\sigma \in \Gal_F L$.
Now, $y\pm v^{\beta}x \in F$,
so $\sigma \eta_1 = (\sigma u_1) (y+v^{\beta}x)$.
By Lemma \ref{lem:preserveunits} following, $\sigma u_1$ is a unit of $S$.
Therefore, $\sigma \eta_1$ cannot be either $\eta_2$, $\ep\eta_2$ or $\ep^2\eta_2$, so that
$\sigma$ cannot be of order $2$.

As a consequence, $\eta_1^3$ and $\eta_2^3$ are fixed by $\Gal_F L$; therefore,
$\eta_1^3, \eta_2^3 \in F$.
So $\Delta$ has a square root in $F$, namely, $\eta_1^3 - \eta_2^3$.
Since $R$ is a unique factorization domain and $F$ is its field of fractions, $\Delta$ also has a square root in $R$.
\end{proof}

\begin{lemma}\label{lem:preserveunits}
Let $S^\times$ denote the set of units of $S$.
Let $\sigma$ be an automorphism of the field of fractions of $S$. Then $\sigma( S^\times ) = S^\times$ and $\sigma S = S$.
\end{lemma}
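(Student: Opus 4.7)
The plan is to establish $\sigma S = S$ first; then $\sigma(S^\times) = S^\times$ follows readily, because the maximal ideal $\fm = (x,y)S$ of the local ring $S$ is generated by $x, y \in F$ which are fixed by $\sigma$ (we view $\sigma$ as an $F$-automorphism of $M$, extending, if necessary, the given element of $\Gal_F L$). Granting $\sigma S = S$, we obtain $\sigma(\fm) = (x,y)\sigma S = (x,y)S = \fm$, hence $\sigma(S^\times) = \sigma(S\setminus\fm) = S\setminus\fm = S^\times$.

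For $\sigma S = S$, my first step would be to show that $\sigma$ preserves the ring $K := \overline{\uk\llb v\rrb}$ setwise. The key observation is that $K$ coincides with the relative algebraic closure of $\uk\llb v\rrb \subset F$ inside $M$, and since $\sigma$ fixes $F$ and preserves algebraic dependence, $\sigma K = K$. Combined with $\sigma x = x$ and $\sigma y = y$, this implies that $\sigma$ preserves the regular local subring $K[x,y]_{(x,y)} \subset S$ together with its $(x,y)$-adic filtration. Since $S$ is the $(x,y)$-adic completion of $K[x,y]_{(x,y)}$, the image $\sigma S$ is a complete $2$-dimensional regular local subring of $M$ that also contains $K[x,y]_{(x,y)}$ with $(x,y)$ as a regular system of parameters and with residue field naturally identified with $K$. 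Thus both $S$ and $\sigma S$ are $(x,y)$-adic completions of the common subring $K[x,y]_{(x,y)} \subset M$, and by uniqueness of this completion as a subring of $M$ they must coincide.

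The principal obstacle is to make the last uniqueness step precise. Concretely, given $s \in S$ with power series expansion $s = \sum a_{ij} x^i y^j$ ($a_{ij} \in K$), the candidate image $\tilde s := \sum \sigma(a_{ij}) x^i y^j \in S$ agrees with $\sigma(s)$ on every truncation, so $\sigma(s) - \tilde s$ lies in $\fm^N \sigma S + \fm^N S$ for all $N$. Showing that this forces $\sigma(s) - \tilde s = 0$ requires a Krull-type intersection argument in the subring of $M$ generated by $S \cup \sigma S$; its Noetherianity comes from the fact that $S$ and $\sigma S$ are mutually dominated $2$-dimensional regular local rings with the same fraction field $M$ and the same regular parameters $x, y$.
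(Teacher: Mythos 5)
There is a genuine gap, and it sits exactly where you flag it. Your reduction transports the data $K=\overline{\uk\llb w\rrb}$, $x$, $y$ and the subring $K[x,y]_{(x,y)}$ through $\sigma$, and then appeals to ``uniqueness of the $(x,y)$-adic completion as a subring of $M$''. But that uniqueness statement \emph{is} the content of the lemma: a field automorphism of $M$ has no a priori continuity for the $\fm$-adic topology, so $\sigma S$ is just some abstract copy of $K\llbr x,y\rrbr$ inside $M$ containing $K[x,y]_{(x,y)}$, and nothing yet identifies it with $S$. The Krull-type argument you sketch does not close this: the subring of $M$ generated by $S\cup\sigma S$ has no reason to be Noetherian (composita of power series rings inside a large field typically are not), no reason to be local with $(x,y)$ in its Jacobson radical, and the asserted ``mutual domination'' of $S$ and $\sigma S$ is essentially equivalent to $\sigma S=S$, i.e.\ to the conclusion. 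In addition, your argument only treats automorphisms fixing $F$ pointwise (you need $\sigma x=x$, $\sigma y=y$ and $\sigma K=K$, the latter via the relative algebraic closure of $\uk\llb w\rrb$ in $M$ --- that part is fine), whereas the lemma is stated for an \emph{arbitrary} automorphism of the field of fractions of $S$, with no rationality hypothesis at all.

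The missing idea is an intrinsic, purely field-theoretic characterization of $S$ inside $M$, which is how the paper proves the lemma in two lines: since $S$ is a formal power series ring over an algebraically closed field of characteristic zero (in particular a UFD), an element $f\in M$ is a unit of $S$ if and only if for every $n$ there exists $y\in M$ with $f=y^n$ (units have all roots by the binomial series and algebraic closedness of $K$; conversely divisibility of all the orders $\ord_P f$ at height-one primes by every $n$ forces $f\in S^\times$), and then $S=\{f\in M:\ f\in S^\times \text{ or } 1+f\in S^\times\}$. Both descriptions use only the field structure of $M$, so $\sigma(S^\times)=S^\times$ and $\sigma S=S$ for any automorphism of $M$, with no need to fix $F$, $x$, $y$, or $K$. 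If you want to salvage your approach, you would still need an argument of this intrinsic kind (or some other genuinely new input) at the final identification step; as written, the proposal proves only a special case and leaves the decisive step unproved.
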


\begin{proof}
Let $M$ denote the field of fractions of $S$.
As subsets of $M$, the sets $S$ and $S^\times$ admit the following caracterizations, due to the fact that $S$ is a formal power series ring over an algebraically closed field:
$S^\times  = \left\{ f\in M:  \text{ for all } n\in\IN, \text{ there exists } y\in M \text{ such that }
f = y^n \right\}$, and
$S = \left\{ f\in M:  f\in S^\times \text{ or } 1+f\in S^\times \right\}$.
The lemma follows.
\end{proof}

\section{Minimal singularities in four variables}\label{sec:dim4}
In the section, we will prove Theorems \ref{thm:mincompl4var} and 
\ref{thm:minstrongcompl4var} using Theorems \ref{thm:char} and \ref{thm:cleaned}.

 \begin{proof}[Proof of Theorem \ref{thm:mincompl4var}]
 We can reduce to the case that $X$ is an embedded hypersurface
(see Remark \ref{rem:hypersurf}). We then construct the blowing-up sequence in
several steps. (It is possible also to find local normal forms for the minimal
singularities of the total transform; see Remark \ref{rem:mincompl4var}
following the proof.) Let $\cQ_0 := \{\mathrm{nc}4\}$.
\medskip

\noindent
(I) Following the desingularization algorithm, we can blow up with closed admissible
centres lying over the complement
of $\cQ_0$ until the maximum value of the invariant over the complement of $\cQ_0$
is $\inv(\mathrm{nc}3) := (3,0,1,0,1,0,\infty)$. The locus $(\inv = \inv(\mathrm{nc}3))$
is a smooth curve, and its components where $X$ is not generically nc$3$ are closed.
So we can blow up these components.
(For simplicity of notation, we use $X$ to mean also its strict transform in any year of
the blowing-up history, $\cQ_0$ to mean the inverse image of $\cQ_0$, etc.) 

Now, by Theorem \ref{thm:cleaned}; i.e., by the cleaning lemma \cite[Sect.\,2]{BMminI}
applied to the normal forms in Theorem \ref{thm:char}, there is a further sequence
of admissible blowings-up after which
every point of the strict transform of the locus $(\inv = \inv(\mathrm{nc}3))$ is of one of the
following three kinds (where an asterisk in the table means that the exceptional divisor may 
or may not be present at the indicated point).
\medskip

\renewcommand{\arraystretch}{1.2}
\begin{tabular}{r l | l}
             &  singularity  & \,exceptional divisor\\\hline
      nc$3$ & $xyz=0$ & \,$(w=0)^*$\\
      prod & $x(z^2 + wy^2) = 0$ & \,$(w=0)$\\
      cp$3$ & $z^3 + wy^3 + w^2x^3 -3wxyz = 0$\, & \,$(w=0)$     
\end{tabular}
\medskip

The following table lists the singlarities which occur in small neighbourhoods
of $\cQ_0$ and the points in the preceding table. (The equations in the following
table are in suitable \'etale coordinates at the indicated singular points, not 
necessarily with respect to the coodinates in the preceding table.)
\medskip 

\renewcommand{\arraystretch}{1.2}
\begin{tabular}{r | r l | l}
        & & singularity & \,exceptional divisor\\\hline
    nc$4$\, & \,nc$3$ & & \\
                 & \,nc$2$ & & \\
    nc$3$\, & \,nc$2$ & $xy=0$ & \,$(w=0)^*$\\
    prod\,   & \,nc$3$ & & \\
                & \,nc$2$ & & \\
                & \,nc$2$ & $x(z^2 + w)=0$\,   & \,$(w=0)$\\
                & \,pp       & $z^2 + wy^2=0$\, & \,$(w=0)$\\
    cp$3$\, & \,nc$3$ & & \\
                 & \,nc$2$ & & \\
                 & \,dpp     & & \,$(w=0)$          
\end{tabular}
\medskip

\noindent
The absence of an exceptional divisor in any row of the table means, of course, that
the indicated singularity is outside the exceptional divisor shown in the preceding table.
In the case of nc$3$ in this table, the exceptional divisor (if present) is transverse
to smooth points in a small neighbourhood. In the case of prod, there are neighbouring
smooth points $z^2 + w = 0$ with tangent exceptional divisor $(w=0)$. In the last row
of the table, the exceptional divisor $(w=0)$ is with respect to the coordinates for
the equation of cp$3$ in the first table above. In this case, the degenerate pinch points
dpp occur along the nonzero $x$-axis, and the exceptional divisor has tangential contact
(order $3$) at smooth points.
\medskip

\noindent
(II) Let us say we are now in year $j_1$. Let $\cQ_{j_1} := \{\mathrm{nc}4,\, 
\mathrm{nc}3,\, \mathrm{prod},\,\mathrm{cp}3\} = \text{closure of }\{\mathrm{nc}3\}$. The
points nc$4$, prod and cp$3$ are isolated.

Recall that, according to 
\cite[Rmk.\,4.4]{BMminI}, we cannot, in general, reduce
limits of degenerate pinch points to dpp by cleaning. In this step we will show, however,
that, after additional blowings-up of points, 
limiting points of those components of the dpp locus
that are adherent to $\{\mathrm{cp}3\}$ can be cleaned up to give only dpp. At the
same time, we will clean up the components adherent to $\{$prod$\}$ of the locus
of nc$2$ points $x(z^2 + w) = 0$ with tangent exceptional divisor $(w=0)$.      
 
So we blow up $\{\mathrm{prod},\,\mathrm{cp}3\}$. First consider the effect on a prod 
singularity $x(z^2 + wy^2) = 0$. After blowing up, we have a prod singularity with
the same equation, at the origin of the chart with coordinates $(w, x/w, y/w, z/w)$
(the ``$w$-chart''; for economy of notation we use $(w,x,y,z)$ again to denote the
new coordinates). At nonzero points of the $y$-axis in this chart, we have nc$2$
singularities $x(z^2 + w) = 0$ with exceptional divisor $(w=0)$, in suitable \'etale local
coordinates. In the $y$-chart, with coordinates $(w/y, x/y, y, z/y)$ (which we again
denote simply $(w,x,y,z)$), these points occur along the nonzero $w$-axis. 
At the origin of this chart, we have a singularity
$x(z^2 + wy)=0$ with exceptional divisor $(y=0)$. Let us blow up this point. Then
in the new $w$-chart, we get $x(z^2 + y)=0$ with exceptional divisor $(w=0) + (y=0)$. 

Secondly, consider the effect of blowing up a cp$3$ point $z^3 + wy^3 + w^2x^3
- 3wxyz = 0$, where $(w=0)$ is the exceptional divisor. In the $w$-chart, with
coordinates $(w, x/w, y/w, z/w)$, we get a singularity of the same kind at the
origin, with dpp singularities along the nonzero $x$-axis. (According to 
\S\ref{subsec:cpnbhd}, these dpp singularities can be written 
$w^2 + (z-y^2)^2(z+2y^2)=0$, with exceptional divisor $(w+5y^3+3yz = 0)$, in 
suitable local coordinates.) These dpp points occur
along the nonzero $w$-axis of the $x$-chart, with coordinates $(w/x, x, y/x, z/x)$.
At the origin of this chart we have
$$
z^3 + wxy^3 + w^2x^2 - 3wxyz = 0,
$$ 
with exceptional divisor $(x=0)$. Let us blow up this point twice. Then we get
\begin{equation}\label{eq:limdpp}
z^3 + w^3xy^3 + x^2 - 3wxyz = 0,
\end{equation}
with exceptional divisor $(x=0) + (w=0)$ (and the dpp singularities still along
the nonzero $w$-axis). 
Completing the square with respect to $x$, we can rewrite \eqref{eq:limdpp} as
$$
\left(x + \frac{w^3y^3-3wyz}{2}\right)^2 + z^3 - \frac{(w^3y^3-3wyz)^2}{4} = 0,
$$
which is the same thing as
\begin{multline*}
\left(x + \frac{w^3y^3-3wyz}{2}\right)^2 \\
+ \left(z - 3\left(\frac{wy}{2}\right)^2 - \left(\frac{wy}{2}\right)^2\right)^2
\left(z - 3\left(\frac{wy}{2}\right)^2 + 2\left(\frac{wy}{2}\right)^2\right) = 0.
\end{multline*}
Therefore, after a change of variables, we have
\begin{equation}\label{eq:limdpp1}
x^2 + (z - w^2y^2)^2(z + 2 w^2y^2) = 0,
\end{equation}
with exceptional divisor $(x + 5w^3y^3 + 3wyz = 0) + (w=0)$ (compare \S\ref{subsec:cpnbhd}).

We can apply the cleaning lemma to \eqref{eq:limdpp1}: We first blow up $(x=z=w=0)$
twice to get
$$
x^2 + w^2(z-y^2)^2(z+2y^2) = 0,
$$
with exceptional divisor $(x + 5wy^3 + 3wyz = 0) + (w=0)$. Then we blow up
$(x = w =0)$ to get a dpp
$$
x^2 + (z-y^2)^2(z+2y^2) = 0,
$$
with exceptional divisor $(x + 5y^3 + 3yz = 0) + (w=0)$. (In particular, no new 
singularity types occur as limits of dpp points in a neighbourhood of 
$\{\mathrm{cp}3\}$.) The centres of the blowings-up
involved in the cleaning are isolated from $\{\mathrm{nc}4,\,\mathrm{nc}3,\,
\mathrm{prod},\,\allowbreak \mathrm{cp}3\} = \text{closure of }\{\mathrm{nc}3\}$.
\medskip

\noindent
(III) Let us say we are now in year $j_2 \geq j_1$. Let $\cQ_{j_2}$ denote the union
of $\{\mathrm{nc}4,\,\mathrm{nc}3,\,\mathrm{prod},\,\mathrm{cp}3\} = \text{closure of }\{\mathrm{nc}3\}$, the adherent components of the dpp locus, and the adherent components
of the locus of nc$2$ points  $x(z^2 + w) = 0$ with tangential exceptional divisor $(w=0)$.
Then $\cQ_{j_2}$ is a closed subset of (the strict transform in year $j_2$ of) $X$.
(Note that in the current year $j_2$, for each cp$3$ singularity $z^3 + wy^3 + w^2x^3
- 3wxyz = 0$, where $(w=0)$ is the exceptional divisor, the adherent component of
the dpp locus comprises the dpp points which lie in the indicated component $(w=0)$
of the 
exceptional divisor. The purpose of step (II) was to guarantee that these dpp points
together with the cp$3$ points form a closed subset. A similar remark applies to
each prod point and the neighbouring nc$2$ points with tangential exceptional divisor.)

In some neighbourhood of $\cQ_{j_2}$, all singular points outside $\cQ_{j_2}$ are nc$2$,
$xy=0$, with exceptional divisor $(w=0)$ if present (in suitable \'etale local coordinates).
Moreover, all previous blowings-up are $\inv_1$-admissible, so we can extend $\inv_1$
in year $j_2$ to an invariant $\inv$ as usual (i.e., considering $j_2$ to be ``year zero'' for
$\inv_{3/2}$). At a neighbourhing nc$2$ point $a$ outside $\cQ_{j_2}$, we have 
$\inv(a) = \inv(\mathrm{nc}2) := (2,0,1,0,\infty)$ if $a$ lies outside the exceptional
divisor. If $a$ is in the exceptional divisor, we have either $\inv(a) = (2,0,1,1,1,0,\infty)$,
or $\inv(a) = (2,1,1,0,1,0,\infty)$. In either case, the exceptional divisor is transverse to
the nc$2$-locus and therefore transverse to any maximal contact hypersurface.

We can blow up with closed admissible centres outside $\cQ_{j_2}$ until the maximal
value of $\inv$ is $\leq (2,1,1,0,1,0,\infty)$. Then any component of the locus
$(\inv = (2,1,1,0,1,0,\infty))$ which is not generically nc$2$ with transverse exceptional
divisor is separated from $\cQ_{j_2}$, so we can also blow up these components.
It is now not difficult to see that, at any singularity outside $\cQ_{j_2}$ with $\inv =
(2,1,1,0,1,0,\infty)$, we can choose local \'etale coordinates $(v,w,x,y)$ 
in which $X$ and the ``old'' exceptional divisor (counted by $s_1 = 1$ in 
$\inv$) are given (respectively) either by equations of the form
\begin{align*}
z^2 + w^\al y^2 &= 0,\\
\zeta + w^\be (\eta + w^\g v) &= 0,
\end{align*}
where $\al \leq 2\be$, $\zeta$ belongs to the ideal generated
by $z$, $\eta$ belongs to the ideal generated by $y$, and $(w=0)$ is the
``new'' exceptional divisor, or by equations of the form
\begin{align*}
z^2 + w^\al (\nu + w^\g y)^2&= 0,\\
\zeta + w^\be v &= 0,
\end{align*}
where $\al \geq 2\be$, $\zeta$ is in the ideal generated
by $z$, $\nu$ is in the ideal generated by $v$, and $(w=0)$ is the
new exceptional divisor.
(Compare with 
 \cite[\S1.2 and Lemma 4.2]{BMminI}.)
In both cases, by the cleaning lemma, we can blow up to 
get either nc$2$, $z^2 + y^2 = 0$, or pp, $z^2 + wy^2 = 0$, where $(v=0)$ and
perhaps $(w=0)$ (for example, in the pp case) give the support of the exceptional divisor.

We now repeat essentially 
the same operations using $(2,0,1,1,1,0,\infty)$ for the maximum
value of $\inv$
outside $\cQ_{j_2}$ and the strict transform of $(\inv = (2,1,1,0,1,0,\infty))$ above, then 
again using $\inv(\mathrm{nc}2)$ for the maximum value outside $\cQ_{j_2}$ and the 
strict transforms of the previous two $\inv$-loci. The points outside $\cQ_{j_2}$ that have
been cleaned up are all nc$2$ or pp, with exceptional divisor as indicated above.
\medskip

\noindent
(IV) We can now use the desingularization algorithm to resolve any singularities
remaining outside the locus of points with singularities in $\cS$, by admissible blowings-up.
This completes the proof.
\end{proof}

\begin{remark} \label{rem:mincompl4var}
The proof above provides normal forms for the total transform at every singular
point of the strict transform. In order to get appropriate normal forms for the total
transform at smooth points of the strict transform, 
we should continue as in
\cite[Rmk.\,4.3]{BMminI}. The normal forms will include the possibility of tangential contact
of a component of the exceptional divisor, of order $2$ (for, example, in a neighbourhood
of a pinch point) or order $3$ (for example, in a neighbourhood of a degenerate
pinch point).
\end{remark}

\begin{proof}[Proof of Theorem \ref{thm:minstrongcompl4var}]
We begin as in Theorem \ref{thm:mincompl4var}, repeat steps (I) and (II)
of the latter, and then continue as follows.
\medskip

\noindent
(III) Let us say we are now in year $j_2 \geq j_1$. Set $\cQ_{j_2}' := \cQ_{j_2} \cup
\{\mathrm{exc}\}$, where $\cQ_{j_2}$ is the subset defined in step (III) of 
Theorem \ref{thm:mincompl4var} and $\{\mathrm{exc}\}$ denotes the set of 
exceptional 
singularities \eqref{eq:excII}. We can blow up with closed admissible centres
outside $\cQ_{j_2}'$ until the maximum value of the $\inv$ over the complement
of $\cQ_{j_2}'$ is $\inv(\mathrm{dpp}) := (2,0,3/2,0,2,0,\infty)$. 

The locus $(\inv = \inv(\mathrm{dpp}))$ is a smooth curve. Each component of
this curve either contains no dpp or is generically dpp (according as
$\Sing X$ has codimension $> 2$ or $=2$ at the generic point). Components
with no dpp are closed and separated from $\cQ_{j_2}'$; we can blow up to
get rid of these components. So any remaining component of 
$(\inv = \inv(\mathrm{dpp}))$ is generically dpp.  

By 
\cite[Rmk.\,4.4]{BMminI}, at any point of $(\inv = \inv(\mathrm{dpp}))$, the
(strict transform of) $X$ is defined by an equation of the form
$$
z^2 + w^\al \left(y + w^\be x^2 \right)^2 \left(y - 2w^\be x^2\right) = 0,
$$
in suitable \'etale coordinates $(w,x,y,z)$, 
where $w^\al$, $w^\be$ are monomials
in the exceptional divisor $(w=0)$. (The exceptional divisor has only one
component at the given point, since any component is transverse to
$(\inv = \inv(\mathrm{dpp})$.) 

We can use the cleaning lemma to blow up to reduce to the case 
that $\be = 0$ and $|\al| = 0$ or $1$. (The centres of blowing up involved
are closed in $X$ and disjoint from $\cQ_{j_2}' \cup \{\mathrm{dpp}\}$.)
If $\al = 0$, then we
have a dpp.
If $|\al| = 1$, then we can rewrite
the equation as
\begin{equation}\label{eq:preexc}
z^2 + w y (y + x^2)^2 = 0.
\end{equation}
(We recall that blowing up $(z = y = w = 0)$ results in an exceptional
singularity $z^2 + y(w y + x^2) = 0$.) 
\medskip

\noindent
(IV) Let us say we are now in year $j_3 \geq j_2$. Let $\cQ_{j_3}'$ denote 
$\cQ_{j_2}'$ together with all degenerate pinch points and their limits.
(Limits of dpp outside $\cQ_{j_2}'$ are either dpp or singularities of the 
form \eqref{eq:preexc}.) The blowings-up involved in (III) are 
$\inv_1$-admissible, so we can extend $\inv_1$
in year $j_3$ to an invariant $\inv$ as usual (i.e., considering $j_3$ to be 
year zero for $\inv_{3/2}$; cf. step (III) in the proof of 
Theorem \ref{thm:mincompl4var}). We can blow up with closed admissible centres
outside $\cQ_{j_3}'$ until the maximum value of the $\inv$ over the complement
of $\cQ_{j_3}'$ is $\inv(\mathrm{pp}) := (2,0,3/2,0,1,0,\infty)$. 

We argue as in step (III).
The locus $(\inv = \inv(\mathrm{pp}))$ is a smooth curve. Each component of
this curve either contains no pp or is generically pp. Components
with no pp are closed and separated from $\cQ_{j_3}'$; we can blow up to
get rid of these components. So any remaining component of
$(\inv = \inv(\mathrm{pp}))$ is generically pp. At any point
of $(\inv = \inv(\mathrm{pp}))$, $X$ is defined by an equation of the form
$$
z^2 + w^\al \left(y + w^\be x \right)^2 \left(y - 2w^\be x\right) = 0,
$$
in suitable \'etale coordinates $(w,x,y,z)$,
where $w^\al$, $w^\be$ are monomials
in the exceptional divisor $(w=0)$. We can proceed as in the proof
of 
\cite[Thm.\,1.14]{BMminI}, using the cleaning lemma, to 
reduce to the case that $\al = \be = 0$; i.e., to a pinch point with 
exceptional divisor transverse to the pp locus. 
\medskip

\noindent
(V) Say we are now in year $j_4 \geq j_3$, and set $\cQ_{j_4}' := \cQ_{j_3}' \cup 
\{\mathrm{pp}\}$. We then repeat the argument of Step (III) in Theorem
\ref{thm:mincompl4var}, successively using $\inv = (2,1,1,0,1,0,\infty)$,
$(2,0,1,1,1,0,\infty)$, $(2,0,1,0,\infty)$ as maximum value and then cleaning
the corresponding locus. The points outside $\cQ_{j_4}'$ that have been
cleaned up are all either nc$2$, $z^2 + y^2 = 0$, or pp,
$z^2 + uy^2 = 0$, with appropriate exceptional divisor.
\medskip

\noindent
(VI) Say we are now in year $j_5 \geq j_4$, and define $\cQ_{j_5}'$ by adjoining
the latter points to $\cQ_{j_4}'$. Then $\cQ_{j_5}'$ comprises singularities
in $\cS'$, together perhaps with singularities of type \eqref{eq:preexc}.
Consider a singularity \eqref{eq:preexc}. 
At a nearby point $a$ where $z=y=w=0$,
$y+x^2 \neq 0$, we can choose \'etale coordinates $(w,x,y,z)$ in which we
have $z^2 + wy = 0$ (i.e., $a$ is a quadratic cone singularity) with 
exceptional divisor $(w=0)$. So $\inv(a) = (2,0,1/2,1,1,0,\infty)$.

We can blow up with closed admissible centres separated from $\cQ_{j_5}'$
until the maximum value of $\inv$ in the complement of $\cQ_{j_5}'$ 
is $(2,0,1/2,1,\allowbreak 1,0,\infty)$. 
Then the locus $(\inv = (2,0,1/2,1,1,0,\infty))$ 
extends to a point \eqref{eq:preexc} as above, as $z = y = w = 0$. So this
locus, together with limiting points of the form \eqref{eq:preexc} is a
closed set that provides an admissible centre of blowing up. We blow up
this locus. The effect is to convert singularities \eqref{eq:preexc} to 
exceptional singularities exc. 
\medskip

\noindent
(VII) We can now use the desingularization algorithm to resolve any
singularities remaining outside the locus of points with singularities
in $\cS'$, by admissible blowings-up. This completes the proof.
(Normal forms for the total transform can again be handled as in Remark
\ref{rem:mincompl4var}.)  
\end{proof}

\bibliographystyle{amsplain}

\end{document}